\title
{Rate of convergence for traditional P\'olya urns }
\date{21 November, 2019}
\author{Svante Janson}
\address{Department of Mathematics, Uppsala University, PO Box 480,
SE-751~06 Uppsala, Sweden}
\email{svante.janson@math.uu.se}
\urladdr{http://www.math.uu.se/svante-janson}
\subjclass[2010]{} 
\numberwithin{equation}{section}
\renewcommand\le{\leqslant}
\renewcommand\ge{\geqslant}
\theoremstyle{plain}
\newtheorem{theorem}{Theorem}[section]
\newtheorem{lemma}[theorem]{Lemma}
\newtheorem{corollary}[theorem]{Corollary}
\theoremstyle{definition}
\newtheorem{remark}[theorem]{Remark}
\theoremstyle{remark}
\newenvironment{romenumerate}[1][-10pt]{
\addtolength{\leftmargini}{#1}\begin{enumerate}
 }{\end{enumerate}}
\newcounter{oldenumi}
{\setcounter{oldenumi}{\value{enumi}}
\begin{romenumerate} \setcounter{enumi}{\value{oldenumi}}}
{\end{romenumerate}}
\newcounter{thmenumerate}
\newenvironment{thmenumerate}
{\setcounter{thmenumerate}{0}%
 \def\item{\par
 \refstepcounter{thmenumerate}\textup{(\roman{thmenumerate})\enspace}}
}
{}
\newcounter{xenumerate}   
\newcommand{\refT}[1]{Theorem~\ref{#1}}
\newcommand{\refTs}[1]{Theorems~\ref{#1}}
\newcommand{\refC}[1]{Corollary~\ref{#1}}
\newcommand{\refL}[1]{Lemma~\ref{#1}}
\newcommand{\refR}[1]{Remark~\ref{#1}}
\newcommand{\refS}[1]{Section~\ref{#1}}
\newcommand{\refSs}[1]{Sections~\ref{#1}}
\newcommand\marginal[1]{\marginpar[\raggedleft\tiny #1]{\raggedright\tiny#1}}
\newcommand\SJ{\marginal{SJ} }
\newcommand\REM[1]{{\raggedright\texttt{[#1]}\par\marginal{XXX}}}
\newcommand\XREM[1]{\relax}
\xdef\klockan{\the\count1.0\the\count255}
\xdef\klockan{\the\count1.\the\count255}\fi
\newcommand{\sumiok}{\sum_{i=0}^k}
\newcommand{\sumiq}{\sum_{i=1}^q}
\newcommand\set[1]{\ensuremath{\{#1\}}}
\newcommand\bigset[1]{\ensuremath{\bigl\{#1\bigr\}}}
\newcommand\Bigset[1]{\ensuremath{\Bigl\{#1\Bigr\}}}
\newcommand\xpar[1]{(#1)}
\newcommand\bigpar[1]{\bigl(#1\bigr)}
\newcommand\Bigpar[1]{\Bigl(#1\Bigr)}
\newcommand\biggpar[1]{\biggl(#1\biggr)}
\newcommand\abs[1]{|#1|}
\newcommand\bigabs[1]{\bigl|#1\bigr|}
\def\rompar(#1){\textup(#1\textup)}    
\newcommand\xfrac[2]{#1/#2}
\newcommand\xpfrac[2]{(#1)/#2}
\newcommand\Bigparfrac[2]{\Bigpar{\frac{#1}{#2}}}
\newcommand\xparfrac[2]{\xpar{\xfrac{#1}{#2}}}
\def\xexp(#1){e^{#1}}
\newcommand\ceil[1]{\lceil#1\rceil}
\newcommand\floor[1]{\lfloor#1\rfloor}
\newcommand\frax[1]{\{#1\}}
\newcommand\ntoo{\ensuremath{{n\to\infty}}}
\newcommand\norm[1]{\|#1\|}
\newcommand\upto{\nearrow}
\newcommand\punkt{\xperiod}    
\newcommand\iid{i.i.d\punkt}    
\newcommand\ie{i.e\punkt}
\newcommand\eg{e.g\punkt}
\newcommand{\as}{a.s\punkt}
\newcommand\ii{\mathrm{i}}
\newcommand{\tend}{\longrightarrow}
\newcommand\dto{\overset{\mathrm{d}}{\tend}}
\newcommand\eqd{\overset{\mathrm{d}}{=}}
\newcommand\bbR{\mathbb R}
\newcommand\bbT{\mathbb T}
\newcommand\bbZ{\mathbb Z}
\newcounter{CC}
\newcounter{cc}
\newcommand\E{\operatorname{\mathbb E{}}}
\renewcommand\P{\operatorname{\mathbb P{}}}
\newcommand\Bi{\operatorname{Bi}}
\newcommand\ga{\alpha}
\newcommand\gb{\beta}
\newcommand\gd{\delta}
\newcommand\gD{\Delta}
\newcommand\gf{\varphi}
\newcommand\gG{\Gamma}
\newcommand\gO{\Omega}
\newcommand\eps{\varepsilon}
\renewcommand\phi{\xxx}  
\newcommand\cL{{\mathcal L}}
\newcommand\cS{{\mathcal S}}
\newcommand\qw{^{-1}}
\newcommand\qww{^{-2}}
\newcommand\qqw{^{-1/2}}
\newcommand\dd{\,\mathrm{d}}
\newcommand\vektor{\mathbf}
\newcommand\vX{\vektor{X}}
\newcommand\vx{\vektor{x}}
\newcommand\vY{\vektor{Y}}
\newcommand\vV{\vektor{V}}
\newcommand\vW{\vektor{W}}
\newcommand\vWx{\vektor{W}^*}
\newcommand\ve{\vektor e}
\newcommand\bbRp{\bbR_{>0}}
\newcommand\citetyear[1]{\citet{#1} (\citeyear{#1})}
\newcommand\Dir{\mathrm{Dir}}
\newcommand\Beta{\mathrm{Beta}}
\newcommand\nk{_{n,k}}
\newcommand\yn{Y_{n,1}}
\newcommand\gagb{_{\ga,\gb}}
\newcommand\nn{\floor{n/2}}
\newcommand\ynx{\yn^*}
\newcommand\Zx{Z^*}
\newcommand\Ooo{O_{L_\infty}}
\newcommand\normoo[1]{\norm{#1}_{L_\infty}}
\newcommand\CK{K}
\newcommand\CKK{K_1}
\newcommand\ellx[1]{\ell_{#1}}
\newcommand\ellp{\ellx{p}}
\newcommand\elloo{\ellx{\infty}}
\newcommand\dw{d_{\rm W}}
\newcommand\dks{d_{\rm{KS}}}
\newcommand\dl{d_{\rm{L}}}
\newcommand\KS{Kolmogorov--Smirnov}
\newcommand{\Polya}{P\'olya}
\newcommand{\Levy}{L\'evy}
\begin{document}

\begin{abstract} 
Consider a P\'olya urn with balls of several colours, where balls are drawn
sequentially 
and each drawn ball immediately 
is replaced together with a fixed number of balls of the
same colour. It is well-known that the proportions of balls of the different
colours converge in distribution to a Dirichlet distribution. We show that
the rate of convergence is $\Theta(1/n)$ in the minimal $L_p$ metric for any
$p\in[1,\infty]$, extending a result by Goldstein and Reinert; we further
show the same rate for the \Levy{} distance, while the rate for the
Kolmogorov distance depends on the parameters, \ie, on the
initial composition of the urn.
The method  used here differs from the one used by Goldstein and Reinert,
and uses direct calculations based on the known exact distributions.
\end{abstract}

\maketitle

\XREM{
Write Abstract!
Update date!
No showkeys!
}

\section{Introduction}\label{S:intro}

A (traditional) \Polya{} urn contains balls of several colours.
At discrete time steps, a ball is drawn
at random from the urn, and it is replaced together with $a$ balls of the
same colour, where $a>0$ is some given constant.
These urn models 
were studied already in \citeyear{Markov1917} by \citet{Markov1917}, 
but they 
are named after George \Polya, who studied them in
\citetyear{EggPol1923} and \citetyear{Polya1930}.
(These early references studied the case $q=2$; the extension to general
$q$ is straightforward.
See also \eg{}
\cite[Chapter 4]{JohnsonKotz} and \cite{Mahmoud}.)

Let $q$ be the number of colours 
and let  
the vector $\vX_n=(X_{n,1},\dots,X_{n,q})$ ($n\ge0$)
describe the numbers of balls
of the different colours at time $n$. 
(We  assume that the colours are the integers $1,\dots,q$.)
The description above thus means that the random vectors $\vX_n$
evolve as a Markov chain, with some given (deterministic) initial vector
$\vX_0=\vx_0=(x_1,\dots,x_q)$ and 
\begin{equation}\label{polya}
  \P\bigpar{\vX_{n+1}=\vX_n+a\ve_i\mid\vX_n}
=\frac{X_{n,i}}{|\vX_n|},
\qquad i=1,\dots,q,
\end{equation}
where
$\ve_i=(\gd_{ij})_{j=1}^q$ are the unit vectors and
 $|\vX_n|:=\sumiq X_{n,i}$.
Note that the total number of balls $|\vX_n|=an+|\vx_0|$ is deterministic.

Although the formulation in the first paragraph above talks about 'the
number of balls', thus implying that the numbers are integers, we see that
no such assumption is needed in the more formal definition using \eqref{polya}.
Hence, from now on, $a$ and $x_1,\dots,x_q$
may be arbitrary positive real numbers, and thus the random vectors
$\vX_n\in\bbRp^q$. 
We assume that $a$ and $x_1,\dots,x_q$ are strictly positive to avoid
trivial complications; this implies that $X_{n,i}\ge x_i>0$ for all $n\ge0$
and $i\in[q]:=\set{1,\dots,q}$. In particular, $|\vX_n|>0$, and
the transition probabilities are well defined by 
\eqref{polya}.

One advantage of the extension to real values is that the model is
homogeneous in the sense that if we multiply all $X_n$ (including the
initial values) and $a$ by the same positive real number, then \eqref{polya}
still holds, so we have another \Polya{} urn.
In particular, by replacing $\vX_n$ by $\vX_n/a$, we may without loss of
generality assume that $a=1$.

We define also the random vector $\vY_n=(Y_{n,1}.\dots,Y_{n,q})$, where
$Y_{n,i}$ is the number of the first $n$ drawn balls that have colour $i$.
Obviously,
\begin{equation}\label{xy}
  \vX_n=\vx_0+a\vY_n,
\end{equation}
so it is equivalent to study $\vX_n$ and $\vY_n$.

It is easy to find the exact distribution of $\vY_n$ and thus of $\vX_n$,
see \eg{} \cite{Markov1917,EggPol1923,Polya1930,JohnsonKotz,Mahmoud}
and \eqref{pni} below, and from this it is easy to see that 
as \ntoo,
the  fraction of balls of a given colour converges in distribution 
to  a Beta distribution \cite{Polya1930}; more precisely
\begin{equation}\label{limbeta}
\frac{X_{n,i}}{|\vX_n|}\dto \Beta\Bigpar{x_i/a,\sum_{j\neq i}x_j/a}.
\end{equation}
 More generally, the vector $\vX_n/|\vX_n|$ of proportions converges to a
Dirichlet distribution
(see \refS{Snot} for the definition):
\begin{equation}\label{limdir}
\frac{\vX_{n}}{|\vX_n|}\dto \Dir\xpar{\vx_0/a}.
\end{equation}
It follows by \eqref{xy} that the same holds for the
proportions $\vY_n/|\vY_n|$.

\begin{remark}
  In fact, it is easy to see that $\vX_n/|\vX_n|$ is a martingale, and thus 
$\vX_n/|\vX_n|$ converges \as{} to a limit, which thus has the Dirichlet
distribution $\Dir\xpar{\vx_0/a}$.
It follows by \eqref{xy} that the same holds for  $\vY_n/|\vY_n|$.
(The \as{} convergence can also be seen in other ways, for example by de
Finetti's theorem 
and the fact that the sequence of colours of the drawn balls is exchangeable,
see \refR{Ras},
or by Martin boundary theory \cite{BlackwellKendall1964,Sawyer}.)
\end{remark}

The purpose of the present paper is to study the rate of convergence 
of the distributions in 
\eqref{limbeta} and \eqref{limdir}.
For the Wasserstein metric $\dw$, this was done by \citet{GoldsteinReinert},
who proved, in the case $q=2$, a bound of the order $O(1/n)$, with an explicit
constant depending on $a$ and $\vx_0$.

The Wasserstein metric equals the minimal $L_1$ metric $\ellx1$, 
and our main result is the following, which
extends the result by \citet{GoldsteinReinert} to $\ellp$ for all
$p\le\infty$, and to all numbers of colours $q\ge2$.
See \refS{Snot} for definitions and \refSs{Spf2}--\ref{Spfq} for proofs.

\begin{theorem}
  \label{T1}
For any $q\ge2$, any $a>0$ and any initial values
$\vX_0=\vx_0
\in\bbRp^q$, 
if\/ $\vW\sim\Dir\bigpar{\vx_0/a}$, then
for 
every $p\in[1,\infty]$ and
all $n\ge1$,
\begin{align}
\ellp\biggpar{\frac{\vX_n}{|\vX_n|},\vW}
&=\Theta\Bigparfrac1{n},\label{ax}
\\
\ellp\biggpar{\frac{\vY_n}{n},\vW}
&=\Theta\Bigparfrac1{n}.\label{ay}
\end{align}
\end{theorem}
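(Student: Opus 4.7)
The plan is to reduce to the case $a=1$ by scaling homogeneity, prove the binary case $q=2$ directly, and handle $q\ge 3$ inductively using the stick-breaking structure of the Dirichlet law. Since $\ellp$ is monotone in $p$ (for any coupling, the $L^p$ norm of the distance is monotone in $p$), it suffices to prove the upper bound for $p=\infty$ and the lower bound for $p=1$. By \eqref{xy}, the difference $\vY_n/n-\vX_n/|\vX_n|$ has sup-norm $O(1/n)$ deterministically, so it further suffices to prove both \eqref{ax} and \eqref{ay} for $\vX_n/|\vX_n|$.

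For $q=2$, the proportion $X_{n,1}/|\vX_n|$ is a one-dimensional random variable, so the minimal $L_p$ metric is attained by the quantile coupling and equals $\|F_n^{-1}-F^{-1}\|_{L^p[0,1]}$, where $F_n$ is the CDF of $X_{n,1}/|\vX_n|$ and $F$ the CDF of $\Beta(x_1,x_2)$. The law of $Y_{n,1}$ is the explicit \Polya/Beta-binomial, and $X_{n,1}/|\vX_n|$ is supported on the uniform grid with spacing $1/(n+|\vx_0|)$. The lower bound $\ellx1\ge c/n$ then follows from the Kantorovich--Rubinstein identity $\ellx1(F_n,F)=\int|F_n(x)-F(x)|\,\ddx x$ and the step structure of $F_n$ near any interior point where $W_1$ has positive density. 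For the upper bound I would establish the pointwise quantile estimate $|F_n^{-1}(u)-F^{-1}(u)|\le C/n$ uniformly in $u\in(0,1)$ by direct asymptotic analysis of the \Polya{} PMF, using the exact formulas for the PMF at indices $k$ near $0$ or $n$ to handle boundary behaviour.

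For $q\ge 3$, I would induct on $q$ using the stick-breaking decomposition $W_1\sim\Beta(x_1,\sum_{i>1}x_i)$ independent of $(W_2,\dots,W_q)/(1-W_1)\sim\Dir(x_2,\dots,x_q)$. The analogous urn decomposition is that, given $Y_{n,1}$, the colours of the draws lying in $\{2,\dots,q\}$ form a $(q-1)$-colour \Polya{} urn on $n-Y_{n,1}$ steps with initial composition $(x_2,\dots,x_q)$. Composing the $q=2$ quantile coupling for the first coordinate with the inductive coupling for the remaining coordinates should then yield $\|\vX_n/|\vX_n|-\vW\|_\infty=O(1/n)$. A small-probability event on which $n-Y_{n,1}$ is much smaller than its mean $\Theta(n)$ must be handled separately, using the deterministic bound $\|\cdot\|_\infty\le 1$ together with the \Polya{} concentration of $Y_{n,1}$.

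The main technical obstacle is the scalar upper bound $|F_n^{-1}(u)-F^{-1}(u)|\le C/n$ near the endpoints when $x_1<a$ or $x_2<a$, so that the Beta density is unbounded at $0$ or $1$. In that regime the Kolmogorov distance $\sup_x|F_n(x)-F(x)|$ is not $O(1/n)$ (as the abstract already notes), but the quantile distance should still be $O(1/n)$: where $F$ rises sharply, a larger CDF discrepancy still corresponds to only a small quantile discrepancy. Making this cancellation rigorous via careful analysis of the Beta-binomial PMF near its edges is the heart of the argument.
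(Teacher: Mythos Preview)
Your overall strategy matches the paper's: reduce to $a=1$, prove $q=2$ via the monotone (quantile) coupling by establishing a uniform bound $|F_n^{-1}(u)-F^{-1}(u)|\le C/n$ from the exact Beta-binomial PMF, then induct on $q$ via the stick-breaking decomposition of the Dirichlet law together with the conditional $(q-1)$-colour urn. Your identification of the boundary case $x_i<a$ as the delicate point is exactly right; the paper handles it through a careful case split $\alpha\lessgtr 1$, $\beta\lessgtr 1$ in its Lemmas on $R_{n,k}:=P_{n,k}-Q_{n,k}$, culminating in $Q_{n,k-K}\le P_{n,k}\le Q_{n,k+K}$, which is precisely your quantile estimate. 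Your lower-bound argument via Kantorovich--Rubinstein and the step structure of $F_n$ is a valid alternative to the paper's, which instead observes that $\{nW_1\}\dto U(0,1)$, so $\P(|Y_{n,1}-nW_1|>\tfrac14)\ge c$ in every coupling.

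There is one genuine wobble in your inductive step. You propose to handle the event that $m:=n-Y_{n,1}$ is atypically small by invoking concentration of $Y_{n,1}$ and the trivial bound $\|\cdot\|_\infty\le 1$ on the exceptional set. That would only yield $\ell_p=O(1/n)$ for $p<\infty$, not $\ell_\infty$: an almost-sure bound cannot be recovered from ``small probability times bounded''. In fact no concentration is needed. The paper sidesteps the issue entirely by running the induction on the \emph{unnormalised} counts: the inductive hypothesis gives a coupling with $|\vY_n'-m\vW^*|\le C$ almost surely for \emph{every} $m\ge 0$ (the constant does not depend on $m$), and combining this with $|Y_{n,1}-nZ^*|\le C$ from the $q=2$ case gives $\vY_n=n(Z^*,(1-Z^*)\vW^*)+O_{L^\infty}(1)$ directly. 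Equivalently, in your proportions formulation the apparent $O(1/m)$ error on the dark coordinates is multiplied by $(1-Z^*)=m/n+O(1/n)$, so it collapses to $O(1/n)$ almost surely without any exceptional event.
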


\begin{corollary}
  \label{C1}
For any $q\ge2$, any $a>0$ and any initial values
$\vX_0=\vx_0
\in\bbRp^q$, 
if\/ $W_i\sim\Beta\bigpar{x_i/a,(|\vx_0|-x_i)/a}$, then
for 
every $p\in[1,\infty]$, every $i\in[q]$ and
all $n\ge1$,
\begin{align}
\ellp\biggpar{\frac{X_{n,i}}{|\vX_n|},W_i}
&=\Theta\Bigparfrac1{n},
\label{axb}
\\
\ellp\biggpar{\frac{Y_{n,i}}{n},W_i}
&=\Theta\Bigparfrac1{n},
\label{ayb}
\end{align}
\end{corollary}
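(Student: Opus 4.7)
The plan is to deduce \refC{C1} from \refT{T1} via two short arguments: an upper bound by projecting an optimal coupling of the full vectors onto the $i$-th coordinate, and a lower bound by reducing to a 2-color \Polya{} urn through color merging.

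For the upper bound, the coordinate projection $\pi_i:(\bbR^q,\ellp)\to\bbR$, $\vektor{v}\mapsto v_i$, is $1$-Lipschitz, since $|v_i-v_i'|\le\norm{\vektor{v}-\vektor{v}'}_p$ for every $p\in[1,\infty]$. Applying $\pi_i$ to an optimal coupling of $\vX_n/|\vX_n|$ and $\vW$ yields a coupling of $X_{n,i}/|\vX_n|$ and $W_i$ of no greater $L_p$-cost, so \refT{T1} gives $\ellp(X_{n,i}/|\vX_n|,W_i)\le\ellp(\vX_n/|\vX_n|,\vW)=O(1/n)$; an analogous argument using \eqref{ay} handles $\ellp(Y_{n,i}/n,W_i)$.

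For the lower bound, the key observation is that grouping the $q$ colors into the two classes $\set{i}$ and $[q]\setminus\set{i}$ turns the pair $(X_{n,i},\,|\vX_n|-X_{n,i})$ into a traditional 2-color \Polya{} urn with addition parameter $a$ and initial composition $(x_i,\,|\vx_0|-x_i)$; this follows immediately from summing \eqref{polya} over $j\ne i$, which gives the correct transition probability $(|\vX_n|-X_{n,i})/|\vX_n|$ for drawing a non-$i$ ball. The normalized version converges to $(W_i,1-W_i)$, and \refT{T1} in the case $q=2$ then yields
\begin{equation*}
\ellp\Bigpar{\bigpar{X_{n,i}/|\vX_n|,\,1-X_{n,i}/|\vX_n|},\,(W_i,1-W_i)}=\Theta(1/n).
\end{equation*}
Since the map $t\mapsto(t,1-t)$ is Lipschitz from $\bbR$ into $(\bbR^2,\ellp)$ with constant at most $2$ uniformly in $p\in[1,\infty]$, this two-vector $\ellp$ distance is bounded above by $2\,\ellp(X_{n,i}/|\vX_n|,W_i)$, and the desired $\Omega(1/n)$ lower bound follows by rearrangement. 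The argument for $Y_{n,i}/n$ is identical.

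The only step beyond bookkeeping is verifying that the color-merging reduction really produces a traditional \Polya{} urn, but this is immediate from the form of \eqref{polya}. I foresee no real obstacle: once \refT{T1} is available for all $q\ge 2$, \refC{C1} is essentially a routine consequence of Lipschitz marginalization and color aggregation.
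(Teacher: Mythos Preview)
Your proposal is correct. The upper bound argument---projecting an optimal coupling of the full vectors onto the $i$-th coordinate---is exactly what the paper does when it says ``the upper bounds follow by \refT{T1}.''

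For the lower bound you take a different route from the paper. The paper argues directly: $W_i$ has a continuous distribution while $Y_{n,i}$ is integer-valued, so in \emph{any} coupling one has $\P\bigl(|Y_{n,i}-nW_i|>\tfrac14\bigr)\ge \P\bigl(\tfrac14<\{nW_i\}<\tfrac34\bigr)\ge c>0$, which immediately gives $\ellx{1}(Y_{n,i}/n,W_i)\ge c/(4n)$. (This is in fact the same argument that supplies the lower bound in \refT{T1} itself.) Your approach instead reduces to a two-colour urn by merging, invokes the $q=2$ case of \refT{T1} for the vector $\bigl(X_{n,i}/|\vX_n|,\,1-X_{n,i}/|\vX_n|\bigr)$, and then uses that $t\mapsto(t,1-t)$ is Lipschitz to transfer the $\Omega(1/n)$ bound back to the scalar. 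This is valid but more circuitous: it uses the full strength of \refT{T1} (including its lower bound) as a black box, whereas the paper's discreteness-versus-continuity argument is self-contained, shorter, and is actually the \emph{source} of the lower bound in \refT{T1}. The advantage of your route is that it is purely structural---no new estimate is needed once \refT{T1} is in hand---while the paper's advantage is that it isolates the elementary reason the rate cannot be faster than $1/n$.
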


We prove \refT{T1} by induction on $q$. The main part of the proof is the
base case $q=2$, which is proved in \refS{Spf2}.
The easy induction for general $q$ is done in \refS{Spfq}.

The proof by \citet{GoldsteinReinert} is based on Stein's method, where they
develop a version for the Beta distribution. In contrast, the present paper
uses only direct calculations, based on the known exact distributions.

Since the $\ellp$ metrics are monotone in $p$, it suffices to prove the
lower bounds for $p=1$ and the upper bounds for $p=\infty$.
Moreover,
  the lower bounds in \eqref{ax}--\eqref{ayb} are trivial, since
$W_i$ has a  continuous distribution with continuous density function,
while $X_{n,i}/|\vX_n|$ and $Y_{n,i}/n$ are discrete with values spaced by 
$a/(an+|\vx_0|)$ and $1/n$, respectively, see \refS{Spf2} for details.
Since $|\vX_n|=an+|\vx_0|$, 
the upper bounds in \eqref{ax} and \eqref{ay} are for $p=\infty$
equivalent to,
respectively, 
\begin{align}
  \elloo\bigpar{{\vX_n},|\vX_n|\vW} &=O(1),\label{a2x}
\\
  \elloo\bigpar{{\vY_n},n\vW} &=O(1).\label{a2y}
\end{align}
Moreover, by \eqref{xy}, \eqref{a2x} is equivalent to  
$  \elloo\bigpar{a\vY_n,an\vW} =O(1)$ and thus to \eqref{a2y}.
Hence the upper bounds in the
two assertions \eqref{ax} and \eqref{ay} in the theorem are
equivalent. 

For the one-dimensional variables $X_{n,i}$ and $Y_{n,i}$,
we give also the corresponding results for the Kolmogorov--Smirnov
distance $\dks$ and the \Levy{} distance $\dl$. 
The proofs are given in \refS{SKS}.

\begin{theorem}  \label{TKS}
  With assumptions and notations as in \refC{C1},
and 
\begin{align}\label{rho}
\rho:=\min\Bigset{\frac{x_i}a,\frac{|\vx_0|-x_i}a,1},   
\end{align}
we have
\begin{align}
\dks\biggpar{\frac{X_{n,i}}{|\vX_n|},W_i}
&=\Theta\bigpar{ n^{- \rho}},
\label{tksx}
\\
\dks\biggpar{\frac{Y_{n,i}}{n},W_i}
&=\Theta\bigpar{ n^{- \rho}}.
\label{tksy}
\end{align}
\end{theorem}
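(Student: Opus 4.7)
The plan is to derive both bounds from the $\ellx\infty$ estimates of \refC{C1}, combined with direct calculations on the Beta density and the exact urn product formula.

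For the upper bound, I would invoke the quantile coupling realising $\ellx\infty(Y_{n,i}/n,W_i)=O(1/n)$: there exist $\tY\sim Y_{n,i}/n$ and $\tW\sim W_i$ on a common space with $|\tY-\tW|\le C/n$ almost surely, and likewise for $X_{n,i}/|\vX_n|$. This yields the sandwich
\begin{equation*}
F_{W_i}(t-C/n)\le F_{Y_{n,i}/n}(t)\le F_{W_i}(t+C/n),
\end{equation*}
so $\dks$ is bounded by the modulus of continuity $\go(\gd):=\sup_t\bigpar{F_{W_i}(t+\gd)-F_{W_i}(t)}$ at $\gd=C/n$. Writing $\ga:=x_i/a$ and $\gb:=(|\vx_0|-x_i)/a$, the Beta density obeys $f_{W_i}(w)\le C w^{\ga-1}$ for $w\le\thalf$ and $f_{W_i}(w)\le C(1-w)^{\gb-1}$ for $w\ge\thalf$; integrating and invoking the subadditivity $(t+\gd)^\ga-t^\ga\le\gd^\ga$ for $\ga\le1$ (and the boundedness of $f_{W_i}$ when $\ga,\gb\ge1$) gives $\go(\gd)=O(\gd^{\min(\ga,1)})+O(\gd^{\min(\gb,1)})=O(\gd^\rho)$, hence $\dks=O(n^{-\rho})$.

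For the lower bound I would split on which factor in \eqref{rho} attains the minimum. If $\ga<1$ (so $\rho=\ga$), the urn product formula yields
\begin{equation*}
\P(Y_{n,i}=0)=\prod_{k=0}^{n-1}\frac{|\vx_0|-x_i+ka}{|\vx_0|+ka}
=\frac{\gG(\ga+\gb)\gG(\gb+n)}{\gG(\gb)\gG(\ga+\gb+n)}
\sim\frac{\gG(\ga+\gb)}{\gG(\gb)}\,n^{-\ga}
\end{equation*}
by Stirling. Since $F_{W_i}(0)=0$, evaluating at $t=0$ gives $\dks(Y_{n,i}/n,W_i)\ge\P(Y_{n,i}=0)=\Omega(n^{-\ga})$; for $X_{n,i}/|\vX_n|$ the atom of the same mass at its minimum value $x_i/|\vX_n|$ yields, via the triangle inequality applied to the CDFs immediately before and at that point, a discrepancy of order $n^{-\ga}$. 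The case $\gb<1$ is symmetric at $t=1$ using $\P(Y_{n,i}=n)$. Finally, when $\rho=1$, observe that $F_{Y_{n,i}/n}$ is constant on each of the $n$ intervals $[k/n,(k+1)/n)$; their $F_{W_i}$-increments sum to $1$, so at least one carries $W_i$-mass $\ge 1/n$, and on that interval the triangle inequality forces $\dks\ge 1/(2n)$. The analogous argument for $X_{n,i}/|\vX_n|$ uses its atom spacing $a/(an+|\vx_0|)=\Theta(1/n)$.

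The one genuinely nontrivial input is \refC{C1}, already available; the remaining pieces---the Beta density bound and the Pochhammer--Stirling asymptotics of $\P(Y_{n,i}=0)$---are routine computations, so I foresee no substantial obstacle. The interest of the theorem lies mainly in how the rate interpolates between $n^{-1}$ (bounded density) and the smaller $n^{-\rho}$ dictated by the integrable singularities of the Beta density at $0$ or $1$.
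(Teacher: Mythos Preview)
Your argument is correct, and your upper bound takes a cleaner route than the paper's. The paper reproves the bound directly from the pointwise estimates $|R_{n,k}|\le Cn^{-1}f_{\ga,\gb}(k/n)$ of \refL{L2}, writing $\dks\le\sup_k|R_{n,k}|+\sup_k|\gD Q_{n,k}|$ and bounding each term by $Cn^{-\rho}$. You instead factor through the already-established $\elloo$ bound of \refC{C1} and reduce the problem to the modulus of continuity of the Beta distribution function; this is more modular and makes transparent \emph{why} the rate degrades exactly when the density is unbounded, at the cost of importing the full machinery behind \refC{C1} (which itself rests on \refL{L2} via \refL{L3}). For the lower bound the two proofs are essentially the same triangle-inequality exploitation of the discrete/continuous mismatch: the paper tests with the continuous increments $\gD Q_{n,1}$, $\gD Q_{n,n}$, $\gD Q_{n,\floor{(n+1)/2}}$, while you test with the discrete atoms $p_{n,0}$, $p_{n,n}$, and a generic interval of $W_i$-mass $\ge1/n$. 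One cosmetic point: your parenthetical ``(so $\rho=\ga$)'' after ``if $\ga<1$'' is not literally correct when $\gb<\ga<1$, but this is harmless since all three of your lower bounds $cn^{-\ga}$, $cn^{-\gb}$, $c/n$ hold simultaneously and their maximum is $cn^{-\rho}$.
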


\begin{theorem}  \label{TL}
  With assumptions and notations as in \refC{C1},
\begin{align}
\dl\biggpar{\frac{X_{n,i}}{|\vX_n|},W_i}
&=\Theta\bigpar{ n^{- 1}},
\label{tlx}
\\
\dl\biggpar{\frac{Y_{n,i}}{n},W_i}
&=\Theta\bigpar{ n^{- 1}}.
\label{tly}
\end{align}
\end{theorem}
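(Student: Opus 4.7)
The plan is to establish the upper bound $O(1/n)$ and the matching lower bound $\Omega(1/n)$ separately.

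For the upper bound, I would use the standard fact that the \Levy{} distance is dominated by the minimal $L_\infty$ metric: if $(X',Y')$ is any coupling of $X$ and $Y$ with $|X'-Y'|\le\epsilon$ \as, then their CDFs $F$ and $G$ satisfy $F(x-\epsilon)\le G(x)\le F(x+\epsilon)$ for every $x$, which is strictly stronger than the defining \Levy{} condition. Hence $\dl\le\ellx\infty$, and the upper bounds in \eqref{tlx} and \eqref{tly} follow immediately from \refC{C1} applied with $p=\infty$.

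For the lower bound, I would exploit the discrete--continuous mismatch. Let $g$ be the density of $W_i$; it is continuous and strictly positive on $(0,1)$, so $g\ge g_0>0$ on some fixed compact subinterval $J=[a_0,b_0]\subset(0,1)$. The variable $X_{n,i}/|\vX_n|$ takes values in a lattice of spacing $\gD_n:=a/|\vX_n|=a/(an+|\vx_0|)=\Theta(1/n)$, and for all $n$ large enough this lattice contains two consecutive points $v_n,v_n+\gD_n\in J$. Let $F_n$ denote the CDF of $X_{n,i}/|\vX_n|$ and $G$ that of $W_i$; since $F_n$ is constant on $[v_n,v_n+\gD_n)$, equal to $c:=F_n(v_n)$, applying the defining \Levy{} inequality $G(x-\epsilon)-\epsilon\le F_n(x)\le G(x+\epsilon)+\epsilon$ at $x=v_n$ and then as $x\uparrow v_n+\gD_n$ respectively yields
\begin{equation*}
G(v_n+\epsilon)\ge c-\epsilon
\qquad\text{and}\qquad
G(v_n+\gD_n-\epsilon)\le c+\epsilon.
\end{equation*}
Subtracting and using $g\ge g_0$ on $[v_n+\epsilon,v_n+\gD_n-\epsilon]\subset J$ (assuming $\epsilon<\gD_n/2$),
\begin{equation*}
g_0(\gD_n-2\epsilon)\le\int_{v_n+\epsilon}^{v_n+\gD_n-\epsilon}g(w)\dd w=G(v_n+\gD_n-\epsilon)-G(v_n+\epsilon)\le2\epsilon,
\end{equation*}
which forces $\epsilon\ge g_0\gD_n/(2(1+g_0))$. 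Combined with the trivial case $\epsilon\ge\gD_n/2$, this gives $\dl(F_n,G)\ge c_1/n$ for some $c_1>0$. The same argument, with lattice spacing $1/n$ in place of $\gD_n$, handles $Y_{n,i}/n$.

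The steps are routine; the only minor point to verify is that two consecutive lattice points land inside the interior interval $J$ for all large $n$, which is clear since the spacing $\gD_n$ tends to zero while the lattice of possible values of $X_{n,i}/|\vX_n|$ spans an interval approaching $[0,1]$.
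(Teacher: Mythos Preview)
Your proof is correct and follows essentially the same approach as the paper: the lower-bound argument is virtually identical (the paper fixes the specific lattice gap at $\lfloor n/2\rfloor/n$ rather than an abstract interval $J$), and for the upper bound the paper appeals directly to Lemma~\ref{L3} rather than to Corollary~\ref{C1}, but since the $\ell_\infty$ bound in that corollary is itself proved via Lemma~\ref{L3}, your route through $\dl\le\ell_\infty$ is just a cleaner repackaging of the same argument.
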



\begin{remark}\label{RKS}
  Note that the rate for the \KS\ distance differs from the other distances
  considered here, but only in the case when one of the parameters 
$\ga:=\xfrac{x_i}a$ and $\gb:=\xpfrac{|\vx_0|-x_i}a$ of
$W_i\sim\Beta(\ga,\gb)$ is less that 1.
It follows from the proofs below that this difference can be attributed to
the fact that the density \eqref{fab} of $W_i$ is unbounded in (precisely)
this case.
\end{remark}

\begin{remark}
  We do not attempt to find explicit values for the constant  $C$ in the upper
bounds above; 
although in principle it should be possible by careful calculations
in the proof.
Note that the proof below treats the cases $x_i<a$, $x_i=a$ and $x_i>a$
separately, and the constants implicit in the calculations might blow up
as $x_i\to a$ from above or below. 
We conjecture that
the constant $C$ can be chosen uniformly for, say, all  $\vx_0$ with
$a/2\le x_i\le 2a$, 
but it is not obvious whether this follows from (some version of)
our method of proof or not,
and we leave that as an open problem.
\end{remark}

\begin{remark}\label{Ras}
The present paper studies the rate of convergence of the \emph{distribution} of
  $\vX_n/|\vX_n|$. As said above, this sequence of random variables
  converges \as{} to a limit $\vW$, and one might also ask about the \as{} 
rate of   convergence of the variables.
In fact, 
the sequence of  colors of the drawn balls is exchangeable,
which (as noted by \cite{Polya1930}) follows from the simple formula for the
probability distribution of the sequence (see \eqref{pni} and the comments
after it).
Hence, 
de Finetti's theorem shows that,
conditioned on the \as{}
limit $\vW$, the sequence $\vY_n$ is the sequence of partial sums of an \iid{}
sequence of random unit vectors $Z_n$, with the distribution
$\P(Z_n=\ve_i)=W_i$; in particular, each coordinate is the sequence of sums
of an \iid{} Bernoulli sequence. (See \eg \cite[Theorem 11.10]{Kallenberg}.)
Consequently, we have the same rate of
convergence for the trajectories
$\vY_n/n$ and $\vX_n/|\vX_n|$ as for the strong law of large
number for \iid{} Bernoulli sequences. Thus, by the central limit theorem,
the distance $|\vX_n/|\vX_n|-\vW|$ is typically of order $n\qqw$, and more
precisely there is a law of iterated logarithm.

Note that the distributions thus
converge faster that the trajectories. 
This method with conditioning on the limit 
can be used
to obtain an upper bound for the distance in
\eqref{ay} (for $p<\infty$, at least), 
see \cite[Lemma 2.1]{KNeininger},
but it only yields a bound
$O(n\qqw)$, the $\ellp$ rate of convergence of 
$n\qw\Bi(n,q)$ to the constant $q$,
for $q\in(0,1)$.
\end{remark}

\begin{remark}
  Generalized versions of the urn model above, where we may add to the urn
  also balls of colours
  other than the colour of the drawn one, have a rich theory with,
  typically, quite different behaviour. See \eg{} \cite{SJ154}.
Such urns will not be studied in the present paper.
\end{remark}

\begin{remark}
  The initial motivation for this study was a paper by \citet{KNeininger},
  where generalized \Polya{} urns (with other replacement schemes) are
  considered; one step in the proof in \cite{KNeininger} was a weaker
  version of \eqref{ayb} with the bound $O(n\qqw)$. I initially hoped that
  the sharper result in \refC{C1} would lead to an improvement of the
  result in \cite{KNeininger} for generalized \Polya{} urns, but it turns
  out that this is not the case; 
the estimate found in \cite{KNeininger} 
of the convergence rate is dominated by other terms in their proof. 
\end{remark}

\section{Notation and other preliminaries}\label{Snot}

\subsection{General}
For a vector $\vx=(x_1,\dots,x_q)\in\bbR^q$, we define
$|\vx|:=\sum_i|x_i|$. (This is mainly for convenience, in for exampe
\eqref{polya}. When defining the metrics below, the Euclidean distance would
be just as fine, within some constant factors in the result.)

$\floor{x}$ denotes the integer part of a real number $x$, and
 $\frax{x}:=x-\floor x$ the fractional part.

$F_X(x):=\P(X\le x)$ denotes the distribution function of a real-valued
random variable $X$.

$C$ and $c$ denote various positive 
constants that may depend on the parameters 
$q$, $a$ and $\vx_0$, 
but not on $n$; they may change from one
occurrence to another.

For sequences $a_n$ and $b_n$ of real numbers, with $b_n>0$,
$a_n=O(b_n)$ means $|a_n|\le C b_n$, for some $C$.
Similarly, 
$a_n=\Theta(b_n)$ means $cb_n\le |a_n|\le C b_n$.

\subsection{Beta and Dirichlet distributions}
Recall that the \emph{Beta distribution} 
$\Beta(\ga,\gb)$, where $\ga,\gb>0$, has
the density function 
\begin{equation}\label{fab}
  f_{\ga,\gb}(x)=B(\ga,\gb)\qw x^{\ga-1}(1-x)^{\gb-1},
\qquad (0<x<1)
\end{equation}
where the normalizing constant is given by the \emph{Beta function}
\begin{equation}
  B(\ga,\gb):=\frac{\gG(\ga)\gG(\gb)}{\gG(\ga+\gb)}.
\end{equation}

Recall also that if $q\ge2$ and $\ga_1,\dots,\ga_q>0$, then the
\emph{Dirichlet distribution} $\Dir(\ga_1,\dots,\ga_q)$ is the distribution
on the simplex $\bigset{(x_1,\dots,x_q)\in\bbRp^q:\sum_i x_i=1}$ with density
\begin{equation}
\frac{\gG(\ga_1+\dots+\ga_q)}{\gG(\ga_1)\dotsm\gG(\ga_q)}\,
  x_1^{\ga_1}\dotsm x_q^{\ga_q} \dd x_1\dotsm \dd x_{q-1}.
\end{equation}
In the case $q=2$, this is essentially the same as a Beta distribution; more
precisely $\vW\in\Dir(\ga,\gb)$ if and only if $\vW=(V,1-V)$, with
$V\in\Beta(\ga,\gb)$. More generally, if
$(W_1,\dots,W_q)\in\Dir(\ga_1,\dots,\ga_q)$, 
then $W_i\sim\Beta(\ga_i,\ga'_i)$, with $\ga'_i:=\sum_{j\neq i}\ga_j$.

One well-known construction of Dirichlet distributed random variables 
is that if $V_1,\dots,V_q$ are independent random variables with
$V_i\sim\Gamma(\ga_i)$, then the vector of proportions
\begin{equation}\label{dirq}
  \frac{(V_1,\dots,V_q)}{\sum_i V_i}\sim\Dir(\ga_1,\dots,\ga_q),
\end{equation}
and this vector is independent of $\sum_{i=1}^q V_i$.
If $q>2$, then, 
letting $V':=\sum_{i=2}^q V_i$,
\begin{equation}
  \frac{(V_1,\dots,V_q)}{\sum_i V_i}
=\Bigpar{\frac{V_1}{V_1+V'},\frac{V'}{V_1+V'}\frac{(V_2,\dots,V_q)}{V'}},
\end{equation}
and it follows from \eqref{dirq} that if $Z\sim\Beta(\ga_1,\ga')$ and
$\vV\sim\Dir(\ga_2,\dots,\ga_q)$ are independent, 
with $\ga'=\sum_{i=2}^q \ga_i$, then 
\begin{equation}\label{dirr}
  \bigpar{Z,(1-Z)\vV}\sim\Dir(\ga_1,\dots,\ga_q).
\end{equation}

\subsection{Some probability metrics}

A probability metric is a metric on a suitable set of probability
distributions in some space $\cS$; in this paper we only consider $\cS=\bbR$
or $\bbR^q$ for some fixed $q$. 
(See \eg{} \cite{Rachev} for a general theory and many examples, including
the ones below.)
Although a probability metric $d(\mu,\nu)$ is
formally defined for distributions $\mu$ and $\nu$, we follow common
practice and write  $d(X,Y):=d(\cL(X),\cL(Y))$ when $X$ and $Y$ are random
variables with distributions $\cL(X)$ and $\cL(Y)$, and  we 
state  the definitions below in this form.

In the present paper, we mainly study the \emph{minimal $L_p$ distance} $\ellp$
for $1\le p\le\infty$,
defined by
\begin{align}
\label{ellp}
\ellp(X,Y):=\inf\bigset{\norm{X'-Y'}_p:X'\eqd X,\,Y'\eqd Y},
\end{align}
where $\norm{X'-Y'}_p$ is the usual $L_p$ distance, \ie,
$(\E|X'-Y'|^p)^{1/p}$ for $p<\infty$, and the essential supremum of
$|X'-Y'|$ if $p=\infty$,
and the infimum is taken ovar all \emph{couplings} of $X$ and $Y$,
\ie, all pairs $(X',Y')$ of random variables on a common probability space
with the same (marginal) distributions as $X$ and $Y$.
(The infimum is actually attained, see  \cite[Corollary 5.3.2]{Rachev}.)  
This is a metric on the
set of all probability distributions in $\bbR^q$
with finite $p$th absolute moment.

The metric $\ellx1$ is also known as the
\emph{Wasserstein distance} 
and the \emph{Kantorovich distance}; 
see \cite{Wasserstein-metric} for a history.

For random variables $X$ and $Y$ in $\bbR$, we consider also the
\emph{\KS\ distance} 
\begin{align}\label{dks}
  \dks(X,Y):=
\sup_{x \in \bbR} |F_X(x) - F_Y(x)| 
=
\sup_{x \in \bbR} |P(X \leq x) - P(Y \leq x)| 
\end{align}
and the \emph{\Levy\ distance}
\begin{align}\label{dl}
  \dl(X,Y):=
\inf\bigset{\eps>0:
F_X(x-\eps)-\eps \le  F_Y(x) 
\le F_X(x+\eps)+\eps
\text{ for all }x}.
\end{align}


\section{Proof of \refT{T1}: $q=2$}\label{Spf2}\SJ

As explained in the introduction, we may without loss of generality assume
$a=1$, and we shall do so throughout the proof. 
For convenience, 
we call the two colours \emph{white} and \emph{black}, and
we  write $x_1=\ga$ and $x_2=\gb$.
We thus assume $a=1$ and $\vx_0=(\ga,\gb)$, and note that then
$|\vX_n|=n+\ga+\gb$. 
Furthermore,  $\vW=(W_1,W_2)=(W_1,1-W_1)$, where $W_1$
is a random variable with the distribution
$\Beta(\ga,\gb)$, 

The parameters $\ga$ and $\gb$ are fixed throughout the section.
Recall that $C$ and $c$
denote various constants that may depend on $\ga$ and $\gb $, 
but not on $n$.

Define, recalling that $Y_{n,1}$ is the number of white balls drawn in the
first $n$ draws, 
\begin{align}
  p_{n,i}&:=\P(Y_{n,1}=i),\label{pni}
\\
P_{n,k}&:=\P(Y_{n,1}\le k)=\sumiok p_{n,i},\label{Pnk}
\end{align}

The basis of the proof is the well-known exact formula
\cite{Markov1917,EggPol1923,Polya1930,JohnsonKotz,Mahmoud}
\begin{equation}\label{pni}
  \begin{split}
	  p_{n,i}&=\binom{n}{i} 
\frac{\ga(\ga+1)\dotsm(\ga+i-1)\gb(\gb+1)\dotsm(\gb+n-i-1)}
{(\ga+\gb)(\ga+\gb+1)\dotsm(\ga+\gb+n-1)}
\\&
=\frac{n!}{i!\,(n-i)!}\frac{\gG(\ga+i)\gG(\gb+n-i)\gG(\ga+\gb)}
{\gG(\ga)\gG(\gb)\gG(\ga+\gb+n)}
\\&
=B(\ga,\gb)\qw
\frac{\gG(i+\ga)}{\gG(i+1)}
\frac{\gG(n-i+\gb)}{\gG(n-i+1)}
\frac{\gG(n+1)}{\gG(n+\ga+\gb)}
  \end{split}
\end{equation}
for $i=0,\dots,n$.
This formula is easily verified:
if we draw $i$ white balls in the first $n$ draws, then this can be done in
$\binom ni$ ways, and it follows from the definition \eqref{polya} of the
urn process that each of them has the same probability given by the fraction
on the first line of \eqref{pni}. The rest is simple calculations with Gamma
functions. 
For future use we note the special cases 
\begin{align}
  p_{n,0}&=\frac{\gG(\ga+\gb)}{\gG(\gb)}\frac{\gG(n+\gb)}{\gG(n+\ga+\gb)},
\label{pn0}
\\
  p_{n,n}&=\frac{\gG(\ga+\gb)}{\gG(\ga)}\frac{\gG(n+\ga)}{\gG(n+\ga+\gb)}.
\label{pnn}
\end{align}

Recall that $W_1\sim\Beta(\ga,\gb)$ has the density function  $f\gagb$  
given in \eqref{fab} 
and define
\begin{align}
  Q_{n,k}&:=\P(W_1\le k/n) = \int_{0}^{k/n}f_{\ga,\gb}(x)\dd x  \label{Qnk}
\intertext{and}
R_{n,k}&:=P\nk-Q\nk=\P(\yn\le k)-\P(W_1\le k/n).\label{Rnk}
\end{align}

We also define $\gD P\nk:=P\nk-P_{n,k-1}=p\nk$, $\gD Q\nk:=Q\nk-Q_{n,k-1}$
and $\gD R\nk:=R\nk-R_{n,k-1}$.
\begin{lemma}
  \label{L1}
  \begin{thmenumerate}
  \item \label{L1a}
If\/ $1\le k\le 3n/4$, then
\begin{equation}
  \label{l1a}
|\gD R\nk|\le C \xfrac{k^{\ga-2}}{n^\ga}.
\end{equation}
If\/ further $\ga=1$, this can be improved to
\begin{equation}
  \label{l1a1}
|\gD R\nk|\le C n\qww.
\end{equation}
  \item \label{L1b}
If\/ $n/4\le k\le n$, then
\begin{equation}
  \label{l1b}
|\gD R\nk|\le C \xfrac{(n-k+1)^{\gb-2}}{n^\gb}.
\end{equation}
If\/ further $\gb=1$, this can be improved to
\begin{equation}
  \label{l1b1}
|\gD R\nk|\le C n\qww.
\end{equation}
  \end{thmenumerate}
\end{lemma}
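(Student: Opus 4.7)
The plan is to write $\gD R\nk = p\nk - \gD Q\nk$, where $\gD Q\nk=\int_{(k-1)/n}^{k/n}f\gagb(x)\dd x$, and compare each term to the common value $\frac{1}{n}f\gagb(k/n)$. A direct computation shows that this value equals $B(\ga,\gb)\qw k^{\ga-1}(n-k)^{\gb-1}n^{1-\ga-\gb}$, which is precisely the product of the leading orders of the three Gamma ratios in the explicit formula~\eqref{pni} for $p\nk$.

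For the comparison with $p\nk$, I would apply the refined Stirling-type bound $\Gamma(x+a)/\Gamma(x+b) = x^{a-b}+O(x^{a-b-1})$ to each of the three Gamma ratios in \eqref{pni}. Multiplying out and using $n-k=\Theta(n)$ for $k\le 3n/4$, the dominant error comes from the ratio involving $k$, yielding $p\nk=\frac{1}{n}f\gagb(k/n)+O\bigpar{k^{\ga-2}n^{-\ga}}$. For $\gD Q\nk$ with $k\ge2$, Taylor expansion of $f\gagb$ on $[(k-1)/n,k/n]$ gives $\gD Q\nk=\frac{1}{n}f\gagb(k/n)+O\bigpar{n^{-2}\sup|f\gagb'|}$ over that subinterval. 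Since $f\gagb'(x)=f\gagb(x)\bigsqpar{(\ga-1)/x-(\gb-1)/(1-x)}$, with $x=\Theta(k/n)$ and $1-x=\Theta(1)$ there, we have $|f\gagb'(x)|\le C(k/n)^{\ga-2}$, and the Taylor error is again $O\bigpar{k^{\ga-2}n^{-\ga}}$. Subtracting yields \eqref{l1a} for $k\ge 2$; the case $k=1$ is handled by the trivial bound $|\gD R_{n,1}|\le p_{n,1}+\gD Q_{n,1}=O(n^{-\ga})$, which matches $k^{\ga-2}/n^\ga$ at $k=1$.

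For the sharpened bound \eqref{l1a1} when $\ga=1$, the first Gamma ratio in \eqref{pni} becomes $\Gamma(k+1)/\Gamma(k+1)=1$, eliminating the dominant Stirling error; the remaining error is a multiplicative $O(1/n)$ correction to the main term $\frac{1}{n}f_{1,\gb}(k/n)=O(n\qw)$, giving $O(n\qww)$. Meanwhile $f_{1,\gb}(x)=\gb(1-x)^{\gb-1}$ is smooth at $0$ with $|f_{1,\gb}'|$ uniformly bounded on $[0,3/4]$, so the Taylor error is also $O(n\qww)$. Part~\ref{L1b} then follows from part~\ref{L1a} by the symmetries $p_{n,k}^{(\ga,\gb)}=p_{n,n-k}^{(\gb,\ga)}$ and $f\gagb(x)=f_{\gb,\ga}(1-x)$, which relate $\gD R\nk^{(\ga,\gb)}$ for $k$ near $n$ to the analogous quantity with $(\ga,\gb,k)$ replaced by $(\gb,\ga,n-k+1)$, modulo a single off-by-one term $p_{n,m}^{(\gb,\ga)}-p_{n,m-1}^{(\gb,\ga)}$ whose size is controlled by the same mean-value argument used to handle $\gD Q\nk$.

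The main obstacle will be to make the Stirling expansion precise enough that the error in $p\nk$ has the additive form $O(k^{\ga-2}/n^\ga)$ uniformly for $1\le k\le 3n/4$: at small $k$ with $\ga<1$ this ``error'' has the same size as the ``main term'' $\frac{1}{n}f\gagb(k/n)$, so the estimate degenerates from a genuine asymptotic into a mere upper bound at the boundary. That is still enough to control the difference $\gD R\nk$, but care with constants is needed to ensure the bound holds with the correct exponent $\ga-2$ in $k$ throughout the range.
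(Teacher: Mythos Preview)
Your proposal is correct and follows essentially the same route as the paper: write $\gD R\nk=p\nk-\gD Q\nk$, approximate both by $n\qw f\gagb(k/n)$ via the Stirling estimate $\gG(x+a)/\gG(x+b)=x^{a-b}(1+O(1/x))$ applied to \eqref{pni}, treat $k=1$ separately by a crude bound, exploit the exact identity $\gG(k+1)/\gG(k+1)=1$ when $\ga=1$, and obtain part~\ref{L1b} by the colour symmetry. The only cosmetic differences are that the paper controls $\gD Q\nk$ by bounding the ratio $f\gagb(x)/f\gagb(k/n)$ directly rather than via the derivative $f\gagb'$, and it phrases the errors multiplicatively as $n\qw f\gagb(k/n)\bigpar{O(1/k)+O(1/(n-k))}$ before specializing to the range $k\le 3n/4$; your additive formulation and your explicit handling of the off-by-one in the symmetry reduction are equivalent.
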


\begin{proof}
  We recall the standard formula (an easy consequence of Stirling's formula,
  see also \cite[5.11.13]{NIST}),
  \begin{equation}\label{GG}
	\frac{\gG(x+a)}{\gG(x+b)} = x^{a-b}\Bigpar{1+O\Bigparfrac{1}{x}}
  \end{equation}
for, say, fixed $a,b\ge0$ and $x\ge1$.

Using \eqref{GG} in \eqref{pni}, we find, for $1\le k<n$,
\begin{equation}\label{gDP}
  \begin{split}
	  p\nk&= B(\ga,\gb)\qw k^{\ga-1}(n-k)^{\gb-1}n^{1-\ga-\gb}
\Bigpar{1+O\Bigparfrac{1}{k}+O\Bigparfrac{1}{n-k}+O\Bigparfrac{1}{n}
}
\\
&=
n\qw f_{\ga,\gb}\Bigparfrac kn 
\Bigpar{1+O\Bigparfrac{1}{k}+O\Bigparfrac{1}{n-k} }.  
\end{split}
\raisetag{\baselineskip}
\end{equation}

Furthermore, if $2\le k<n$, then, for $x\in((k-1)/n,k/n]$,
\begin{equation}
  \frac{f\gagb(x)}{f\gagb(k/n)}
=\Bigparfrac{nx}{k}^{\ga-1}\Bigparfrac{n-nx}{n-k}^{\gb-1}
={1+O\Bigparfrac{1}{k}+O\Bigparfrac{1}{n-k} }.  
\end{equation}
and hence
\begin{equation}\label{gDQ}
  \gD Q_{n,k}=\int_{(k-1)/n}^{k/n}f\gagb(x)\dd x
=n\qw f\gagb\Bigparfrac{k}n
\Bigpar{1+O\Bigparfrac{1}{k}+O\Bigparfrac{1}{n-k} }.  
\end{equation}
Moreover, for $k=1$, we have, crudely,
\begin{equation}\label{gDQ1}
  \gD Q_{n,1}=\int_{0}^{1/n}f\gagb(x)\dd x
\le C \int_{0}^{1/n} x^{\ga-1}\dd x
=C n^{-\ga}
=n\qw f\gagb\Bigparfrac1nO(1),
\end{equation}
and thus \eqref{gDQ} holds also for $k=1$.

Combining \eqref{gDP} and \eqref{gDQ} we thus obtain, for $1\le k< n$,
\begin{equation}\label{gDR}
  \begin{split}
\gD R\nk=\gD P\nk-\gD Q\nk
=
n\qw f_{\ga,\gb}\Bigparfrac kn 
\Bigpar{O\Bigparfrac{1}{k}+O\Bigparfrac{1}{n-k} }.  
\end{split}
\end{equation}
If we further assume $1\le k\le 3n/4$, then $f\gagb(k/n)\le C(k/n)^{\ga-1}$,
and \eqref{l1a} follows.
Similar calculations, or interchanging the colours and using
\eqref{l1a},
yield \eqref{l1b} for $n/4\le k\le n$.
%

Assume now $\ga=1$. Then \eqref{pni} simplifies to, using \eqref{GG},
for $k\le 3n/4$,
\begin{equation}
  p_{n,k}=\gb \frac{\gG(n-k+\gb)}{\gG(n-k+1)}\frac{\gG(n+1)}{\gG(n+1+\gb)}
=\gb (n-k)^{\gb-1}n^{-\gb}\Bigpar{1+O\Bigparfrac{1}{n}}.
\end{equation}
Similarly, still for $k\le 3n/4$, \eqref{gDQ} simplifies to, 
\begin{equation}
  \gD Q_{n,k}=\int_{(k-1)/n}^{k/n}\gb(1-x)^{\gb-1}\dd x
=\gb (n-k)^{\gb-1}n^{-\gb}\Bigpar{1+O\Bigparfrac{1}{n}}
\end{equation}
and we obtain
\begin{equation}
  \gD R_{n,k}=p\nk-\gD Q\nk
=\gb (n-k)^{\gb-1}n^{-\gb}O\Bigparfrac{1}{n}
=O\bigpar{n\qww},
\end{equation}
showing \eqref{l1a1}. The proof of \eqref{l1b1} is similar, or by
interchanging the colours.
\end{proof}

\begin{lemma}
  \label{L2}
  \begin{thmenumerate}
  \item \label{L2a}
If\/ $0\le k\le n/2$, then
\begin{equation}
  \label{l2a}
|R\nk|\le C (k+1)^{\ga-1}n^{-\ga}.
\end{equation}
\item \label{L2b}
If\/ $n/2\le k\le n$, then
\begin{equation}
  \label{l2b}
|R\nk|\le C (n-k+1)^{\gb-1}n^{-\gb}. 
\end{equation}
\item \label{L2c}
If\/ $1\le k\le n-1$, then
\begin{equation}\label{l2c}
  |R\nk|\le Cn\qw f\gagb\xparfrac{k}{n}.
\end{equation}
  \end{thmenumerate}
\end{lemma}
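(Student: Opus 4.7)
The plan is to prove part \ref{L2a}, derive \ref{L2b} from it by the color--swap symmetry, and then deduce \ref{L2c}. The main workhorse is the partial summation
\[
 R\nk = R_{n,0}+\sum_{j=1}^k \gD R_{n,j} = -\sum_{j=k+1}^n \gD R_{n,j}
\]
(the two forms are equivalent because $R_{n,n}=1-1=0$), combined with the increment bounds of \refL{L1}. The starting value is pinned down by \eqref{pn0} and \eqref{GG} as $R_{n,0}=p_{n,0}=\xfrac{\gG(\ga+\gb)}{\gG(\gb)}\cdot n^{-\ga}(1+O(1/n))$, so in particular $|R_{n,0}|\le Cn^{-\ga}$, matching the claim of \ref{L2a} at $k=0$.

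For part \ref{L2a} with $1\le k\le n/2$, I would split into three cases according to $\ga$. When $\ga>1$, the integral test gives $\sum_{j=1}^{k}j^{\ga-2}\le Ck^{\ga-1}$, so \eqref{l1a} yields $|R\nk-R_{n,0}|\le Ck^{\ga-1}n^{-\ga}$, and the claim follows since $(k+1)^{\ga-1}\ge 1$. When $\ga=1$, I would use the improved bound \eqref{l1a1} to get $\sum_{j=1}^k|\gD R_{n,j}|\le Ckn^{-2}\le Cn^{-1}$, matching $(k+1)^{0}n^{-1}$. The case $\ga<1$ is the hard one: the target bound $(k+1)^{\ga-1}n^{-\ga}$ \emph{decreases} from $n^{-\ga}$ at $k=0$ to $\asymp n^{-1}$ at $k=n/2$, so the naive triangle inequality (which only yields $Cn^{-\ga}$) is inadequate. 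Here I would work instead with the dual form $R\nk=-\sum_{j=k+1}^n\gD R_{n,j}$ and split at $j=\floor{n/2}$: the lower piece, bounded by \eqref{l1a}, contributes $\le Ck^{\ga-1}n^{-\ga}$ since for $\ga<1$ the tail satisfies $\sum_{j\ge k+1}j^{\ga-2}\asymp k^{\ga-1}/(1-\ga)$; the upper piece, bounded by \eqref{l1b} (and, when $\gb=1$, by \eqref{l1b1}), contributes at most $Cn^{-1}$, which is $\le Ck^{\ga-1}n^{-\ga}$ for $k\le n/2$ whenever $\gb\ge 1$.

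Part \ref{L2b} then follows from \ref{L2a} by the color--swap symmetry $P\nk=1-\tilde P_{n,n-k-1}$, $Q\nk=1-\tilde Q_{n,n-k}$, where tildes refer to the urn with $(\ga,\gb)$ interchanged; thus \ref{L2b} for $(\ga,\gb)$ reduces to \ref{L2a} for $(\gb,\ga)$. Part \ref{L2c} then drops out: for $1\le k\le n/2$, the factor $(1-k/n)^{\gb-1}$ in $f\gagb(k/n)$ is bounded above and below on this range, so $n^{-1}f\gagb(k/n)\asymp n^{-\ga}k^{\ga-1}\asymp n^{-\ga}(k+1)^{\ga-1}$, and \ref{L2a} gives the bound directly; the range $n/2\le k\le n-1$ is handled symmetrically from \ref{L2b}. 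The main obstacle is the residual subcase $\ga<1$ \emph{and} $\gb<1$ of \ref{L2a}, where the upper piece of the dual sum only yields $O(n^{-\gb})$, which is too weak near $k=n/2$. To close this case, one seems to need an auxiliary pivot estimate such as $|R_{n,\floor{n/2}}|\le Cn^{-1}$ established by a separate argument---for instance, via the de Finetti representation $Y_{n,1}\mid W_1\sim\Bin(n,W_1)$, which gives $R\nk=\int_0^1 h(w)f\gagb(w)\dd w$ with $h(w):=\PP(\Bin(n,w)\le k)-\ett{w\le k/n}$, and exploiting the explicit identity $\int_0^1 h(w)\dd w=(n-k)/(n(n+1))=O(1/n)$ together with the smoothness of $f\gagb$ at the interior point $1/2$ (where $f\gagb$ and its derivatives are bounded even when $\ga,\gb<1$).
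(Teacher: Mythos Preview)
Your skeleton is correct and matches the paper's proof almost exactly: both use the forward/backward partial sums of $\gD R_{n,j}$ from \refL{L1}, treat the cases $\ga>1$, $\ga=1$, $\ga<1$ separately, and in the last case reduce everything to a single pivot estimate $|R_{n,\nn}|\le Cn\qw$. The symmetry derivation of \ref{L2b} from \ref{L2a} and the deduction of \ref{L2c} are also as in the paper (the off-by-one between $\tilde P_{n,n-k-1}$ and $\tilde Q_{n,n-k}$ introduces an extra $\tilde p_{n,n-k}$ term, but that is of the right order and harmless).

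The one place you diverge is the pivot in the residual subcase $\ga<1$ and $\gb<1$. The paper does not go through de Finetti. Instead it observes that the preliminary bounds (your forward and backward sums) already give, for \emph{all} $k$,
\[
R_{n,k}=R_{n,\nn}+O\bigpar{n^{-\ga}(k+1)^{\ga-1}}+O\bigpar{n^{-\gb}(n-k+1)^{\gb-1}},
\]
so summing over $k=0,\dots,n-1$ yields $\sum_k R_{n,k}=nR_{n,\nn}+O(1)$. On the other hand, a direct telescoping shows
\[
\sum_{k=0}^{n-1}R_{n,k}=\E\ceil{nW_1}-\E Y_{n,1}=n\E W_1+O(1)-\E Y_{n,1}=O(1),
\]
because $\E Y_{n,1}=n\,\ga/(\ga+\gb)=n\E W_1$ exactly (from the martingale property of $\vX_n/|\vX_n|$, or by direct computation). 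Combining gives $R_{n,\nn}=O(n\qw)$ with essentially no extra analysis.

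Your de Finetti route is valid in principle but is more work than you indicate: the identity $\int_0^1 h(w)\dd w=O(1/n)$ only controls the piece $f\gagb(1/2)\int h$, and to handle $\int h(w)\bigpar{f\gagb(w)-f\gagb(1/2)}\dd w$ you still need quantitative binomial tail bounds to localize $h$ near $1/2$ together with an estimate like $\int|h(w)(w-1/2)|\dd w=O(1/n)$. All of this is doable, but the paper's first-moment trick gets the pivot in two lines.
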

\begin{proof}
Consider first \ref{L2a}, so $k\le n/2$.
  First note that, by \eqref{pn0} and \eqref{GG},
since $Q_{n,0}=0$,
  \begin{equation}
	R_{n,0}=P_{n,0}=p_{n,0}
=O\bigpar{n^{-\ga}},
  \end{equation}
so \eqref{l2a} holds for $k=0$.

We now use \refL{L1}.
If $\ga>1$, we have by \eqref{l1a},
for $1\le k\le n/2$,
\begin{equation}
  R\nk=R_{n,0}+\sum_{i=1}^k\gD R_{n,i}
=O\Bigpar{n^{-\ga}+n^{-\ga}\sum_{i=1}^k i^{\ga-2}}
=O\bigpar{n^{-\ga}k^{\ga-1}},
\end{equation}
yielding \eqref{l2a} in this case.

If $\ga=1$, we obtain \eqref{l2a} in the same way, now using \eqref{l1a1}.

If $\ga<1$, \eqref{l1a} yields for $0\le k\le \nn$
the preliminary result
\begin{equation}\label{cama}
  \begin{split}
	R_{n,k}=R_{n,\nn}-\sum_{i=k+1}^{\nn}\gD R_{n,i}
&=R_{n,\nn}+O\Bigpar{n^{-\ga}\sum_{i=k+1}^{\nn} i^{\ga-2}}
\\&
=R_{n,\nn}+O\bigpar{n^{-\ga}(k+1)^{\ga-1}}.
  \end{split}
\end{equation}
It thus remains only to show the case $k=\floor{n/2}$; 
we postpone this.

Now turn to \ref{L2b}. 
If $\gb\ge1$,
we obtain in the same way from \eqref{l1b} and \eqref{l1b1},
noting that $R_{n,n}=1-1=0$ by the definition \eqref{Rnk}, 
for $\nn\le k\le n$,
\begin{equation}
  R\nk=-\sum_{i=k+1}^n\gD R_{n,i}
=O\bigpar{n^{-\gb}(n-k+1)^{\gb-1}}.
\end{equation}
If $\gb<1$, we obtain instead, for $\nn\le k\le n$,
\begin{equation}
  R\nk=R_{n,\nn}+\sum_{i=\nn+1}^k\gD R_{n,i}
=R_{n,\nn}+O\bigpar{n^{-\gb}(n-k+1)^{\gb-1}}.
\end{equation}

We have thus shown, for $0\le k\le\nn$,
\begin{equation}\label{ja}
  R_{n,k}=
  \begin{cases}
	O\bigpar{n^{-\ga}(k+1)^{\ga-1}},&\ga\ge1,
\\
	R_{n,\nn}+O\bigpar{n^{-\ga}(k+1)^{\ga-1}},&\ga<1,
  \end{cases}
\end{equation}
and for
$\nn\le k\le n$,
\begin{equation}\label{jb}
  R_{n,k}=
  \begin{cases}
	O\bigpar{n^{-\gb}(n-k+1)^{\gb-1}},&\gb\ge1,
\\
	R_{n,\nn}+O\bigpar{n^{-\gb}(n-k+1)^{\gb-1}},&\gb<1.
  \end{cases}
\end{equation}
This proves \eqref{l2a} and \eqref{l2b} when $\ga\ge1$ and $\gb\ge1$.

Now suppose that $\ga<1$ and $\gb\ge1$. Then 
\eqref{jb} yields $R_{n,\nn}=O\bigpar{n\qw}$, 
and \eqref{ja} shows that \eqref{l2a} holds in this case too.

Similarly, if $\ga\ge1$ and $\gb<1$, then \eqref{ja} yields
$R_{n,\nn}=O\bigpar{n\qw}$ and \eqref{l2b} follows from \eqref{jb}.

It remains to consider the case $\ga<1$ and $\gb<1$.
In this case we sum $R\nk$ over all $k$ and obtain by \eqref{ja} and
\eqref{jb},
recalling $R_{n,n}=0$,
\begin{equation}\label{sumR}
  \begin{split}
	  \sum_{k=0}^{n-1} R\nk
&=
nR_{n,\nn}+O\Bigpar{n^{-\ga}\sum_{k=0}^{\nn} (k+1)^{\ga-1}}
+O\Bigpar{n^{-\gb}\hskip-10pt\sum_{k=\nn+1}^{n-1} (n-k)^{\gb-1}}
\\&
=nR_{n\nn}+O(1).
  \end{split}
\raisetag{\baselineskip}
\end{equation}
On the other hand, by \eqref{Rnk},
\begin{equation}\label{sumR2}
  \begin{split}
	\sum_{k=0}^{n-1} R_{n,k}
&
=\sum_{k=0}^{n-1}\bigpar{\P(\yn\le k)-\P(nW_1\le k)}
\\&
=\sum_{k=0}^{n-1}\bigpar{\P(nW_1>k)-\P(\yn> k)}
\\&
=\sum_{k=0}^{\infty}\P(\ceil{nW_1}>k)
- \sum_{k=0}^{\infty}\P(\yn> k)
\\&
=\E \ceil{nW_1}-\E \yn
=n\E W_1+O(1)-\E \yn.
  \end{split}
\end{equation}
However, $\E W_1=\ga/(\ga+\gb)$ and by \eqref{xy}, 
since $\vX_n/|\vX_n|$ is a martingale
and $|\vX_n|=n+\ga+\gb$,
\begin{equation}
  \E\yn=\E
  X_{n,1}-\ga=(n+\ga+\gb)\frac{\ga}{\ga+\gb}-\ga=n\frac{\ga}{\ga+\gb}
=n\E W_1.
\end{equation}
Hence, \eqref{sumR}--\eqref{sumR2} yield $nR_{n,\nn}+O(1)=O(1)$, and thus
$R_{n,\nn}=O\bigpar{n\qw}$ also in this case.
The proof of \eqref{l2a} and \eqref{l2b} in this case is now completed by
\eqref{ja} and \eqref{jb} as in the cases above.

This completes the proof of \ref{L2a} and \ref{L2b}.
Finally, \ref{L2c} follows from \eqref{l2a}, \eqref{l2b} and \eqref{fab}.
\end{proof}

\begin{lemma}\label{L3}
  There exists an integer $\CK$ such that for all integers $k$ and
  $n\ge1$,
  \begin{equation}
	\label{l3}
Q_{n,k-\CK}\le P_{n,k}\le Q_{n,k+\CK}.
  \end{equation}
\end{lemma}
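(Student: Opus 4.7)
The plan is to reduce \eqref{l3} to two bounds of the form $|R_{n,k}| \le Q_{n,k+K} - Q_{n,k}$ and $|R_{n,k}| \le Q_{n,k} - Q_{n,k-K}$; the desired inequalities $Q_{n,k-K} \le P_{n,k} \le Q_{n,k+K}$ then follow from $P_{n,k} = Q_{n,k} + R_{n,k}$ together with monotonicity of $Q_{n,\cdot}$. Several cases are immediate from the endpoint values $Q_{n,j} = 0$ for $j \le 0$ and $Q_{n,j} = 1$ for $j \ge n$: if $k < 0$ or $k > n$ the inequalities are trivial; if $n \le K$ then $k-K \le 0$ and $k+K \ge n$ for every $0 \le k \le n$; the case $k = n$ is trivial since $R_{n,n} = 0$; and the case $k = 0$ reduces to checking $p_{n,0} = O(n^{-\alpha})$ (from \eqref{pn0}) against $Q_{n,K} \ge c K^\alpha n^{-\alpha}$, which holds for $K$ sufficiently large. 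It thus suffices to handle $1 \le k \le n-1$ with $n > K$.

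For this main range I would apply \refL{L2}\ref{L2c}, which yields $|R_{n,k}| \le C n^{-1} f_{\alpha,\beta}(k/n)$, together with the elementary identity $Q_{n,k+K} - Q_{n,k} = \int_{k/n}^{(k+K)/n} f_{\alpha,\beta}(x) \,dx$ (and the symmetric one for the left-hand gap). The goal is to show that these integrals dominate the error bound, uniformly in $n$ and $k$, for $K$ large enough. I would split into three subregimes. In the bulk $n/4 \le k \le 3n/4$, $f_{\alpha,\beta}$ is bounded above and below by positive constants on $[1/8, 7/8]$ (provided $n \ge 8K$), so both gaps are of order $K/n$ while $|R_{n,k}| = O(1/n)$, and the desired inequality holds for $K$ large. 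In the small-$k$ regime $1 \le k \le n/4$, the bound $f_{\alpha,\beta}(x) \ge c\, x^{\alpha-1}$ on $(0, 1/2]$ yields the right-hand gap $\ge \frac{c}{\alpha n^\alpha}[(k+K)^\alpha - k^\alpha]$, which must dominate a quantity of order $(k+1)^{\alpha-1}/n^\alpha$; this reduces to elementary estimates on $(k+K)^\alpha - k^\alpha$, split into $\alpha \ge 1$ versus $\alpha < 1$ and $k \ge K$ versus $k < K$. The left-hand gap $Q_{n,k} - Q_{n,k-K}$ is treated analogously, and the large-$k$ regime $3n/4 \le k \le n-1$ follows by symmetry through interchanging the two colours.

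The main obstacle I anticipate is the small-$k$ case with $\alpha < 1$, where $f_{\alpha,\beta}$ decreases rapidly across $[k/n, (k+K)/n]$: one must verify that $(k+K)^\alpha - k^\alpha$ grows fast enough in $K$ to absorb the factor $(k+1)^{\alpha-1}$ coming from the local density value, and a symmetric argument (using the region of larger density just to the left of $k$) is needed for the left-hand gap. Since all implicit constants depend only on $\alpha$ and $\beta$, a single sufficiently large integer $K$ works uniformly for all $n \ge 1$ and all integers $k$.
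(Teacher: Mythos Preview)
Your approach is correct and rests on the same core idea as the paper's proof: both invoke \refL{L2}\ref{L2c} to bound $|R_{n,k}|\le Cn^{-1}f_{\alpha,\beta}(k/n)$ and then show this is dominated by the gap $Q_{n,k\pm K}-Q_{n,k}$. The paper, however, avoids your three-regime split and the further $\alpha\gtrless1$ and $k\gtrless K$ subcases. Its key observation is that for any fixed $L\ge1$ and any $L\le k\le n-2L$, one has $x/(k/n)\in[1,2]$ and $(1-x)/(1-k/n)\in[\tfrac12,1]$ throughout $x\in[k/n,(k+L)/n]$, so $f_{\alpha,\beta}(x)\ge c\,f_{\alpha,\beta}(k/n)$ there with a constant depending only on $\alpha,\beta$; integrating gives $Q_{n,k+L}-Q_{n,k}\ge cLn^{-1}f_{\alpha,\beta}(k/n)$ directly, uniformly in $\alpha,\beta$. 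The small-$k$ range $0\le k<L$ (precisely where you anticipate difficulty when $\alpha<1$) is then handled in one line by monotonicity of $P_{n,\cdot}$ rather than of $Q_{n,\cdot}$: $P_{n,k}\le P_{n,L}\le Q_{n,2L}\le Q_{n,k+2L}$, and similarly $k>n-2L$ uses $P_{n,k}\le1=Q_{n,n}$. So your argument works, but restricting first to $k\ge L$ and then bootstrapping the boundary via $P_{n,k}\le P_{n,L}$ is what lets the paper bypass the $\alpha$-dependent computations entirely.
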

\begin{proof}
For any integer $L\ge1$, if $L\le k\le n-2L$, then
by \eqref{fab},
\begin{equation}\label{QL}
Q_{n,k+L}- Q_{n,k}=\int_{k/n}^{k/n+L/n}f\gagb(x)\dd x
\ge \frac{L}{n} c f\gagb\Bigparfrac{k}{n}.
\end{equation}
 It follows by \eqref{QL} and 
\refL{L2}\ref{L2c} that if $L$ is chosen large enough,
then, for all $n$ and $L\le k\le n-2L$,
\begin{equation}
  Q_{n,k+L}-Q\nk \ge R\nk = P\nk-Q\nk,
\end{equation}
and thus
\begin{equation}\label{pq}
  P\nk\le Q_{n,k+L}.
\end{equation}
Furthermore, using \eqref{pq} for $k=L$ (assuming $n\ge 3L$), we see that
if $0\le k<L$, then
\begin{equation}
  P\nk\le P_{n,L}\le Q_{n,2L}\le Q_{n,k+2L}.
\end{equation}
Moreover, if $k\ge n-2L$, then trivially
\begin{equation}
  P\nk \le 1=Q_{n,n}=Q_{n,k+2L}.
\end{equation}
We conclude that if $n\ge 3L$, then for any $k\ge0$, 
\begin{equation}\label{pq+}
  P\nk\le  Q_{n,k+2L}.
\end{equation}
Moreover, this is trivially true also if $k<0$;
thus \eqref{pq+} holds for all $k\in\bbZ$ when $n\ge3L$.

In the same way we see that if $L$ was chosen large enough, also
$Q\nk-Q_{n,k-L}\ge -R\nk$ whenever $2L\le k\le n-L$, and it follows that
$P\nk\ge Q_{n,k-2L}$ for all $k\in\bbZ$ when $n\ge3L$.

This proves \eqref{l3} with $\CK=3L$ for all $n\ge1$, since the case $n<3L$ is
trivial.
\end{proof}

\begin{proof}[Proof of upper bounds in \refT{T1} for $q=2$]
We use the monotone coupling of $\yn$ and $nW_1$.
Since $nW_1$ has a continuous distribution, while 
$\yn$ is discrete,
this coupling can be constructed
as follows.
Define $t_0<\dots<t_n=n$ such that
$\P(nW_1\le t_k)=P_{n,k}=\P(\yn\le k)$, and 
define the function $g:(0,n]\to\set{0,\dots,n}$ by
$g(t)=k$ if $t\in(t_{k-1},t_k]$ (with $t_{-1}:=0$).
Then 
$\P\bigpar{g(nW_1)\le k}=\P(nW_1\le t_k)=\P(\yn\le k)$ for $k=0,\dots,n$, and
thus $\ynx:=g(nW_1)$ has the same distribution as $\yn$.

By \eqref{Qnk} and \refL{L3},
\begin{equation}
  \begin{split}
	  \P(nW_1\le k-\CK)&=Q_{n,k-\CK}\le P\nk=\P(nW_1\le t_k)
\\&
\le Q_{n,k+\CK}=\P(nW_1\le k+\CK),
  \end{split}
\end{equation}
and thus $k-\CK\le t_k\le k+\CK$ for every $k$.
Hence, if $\ynx=k$, so $nW_1\in(t_{k-1},t_k]$, then
$nW_1\le t_k\le k+\CK=\ynx+\CK$, and similarly 
$nW_1>t_{k-1}\ge k-1-\CK=\ynx-\CK-1$.
Consequently, almost surely, $|nW_1-\ynx|\le \CK+1$, which yields the desired
coupling of $nW_1$ and $\yn$.
We have thus shown
\begin{equation}\label{ellooynx}
  \elloo\bigpar{\yn,nW_1}\le\normoo{\ynx-nW_1}\le \CKK:=\CK+1.
\end{equation}
Since $\vY=(\yn,n-\yn)$ and $\vW=(W_1,1-W_1)$, this coupling also shows
$  \elloo\xpar{\vY,n\vW}\le \CKK$, \ie, \eqref{a2y}.
As said in the introduction, this is by \eqref{xy} equivalent to
\eqref{a2x}, and the upper bounds in \eqref{ax}--\eqref{ay} follow.
\end{proof}

For the proof of the lower bounds, we record a simple general fact.
\begin{lemma}
  \label{LLB}
If\/ $W$ is a continuous random variable, then
$\frax{nW}\dto U(0,1)$ as \ntoo. In particular,
\begin{align}\label{llb}
\P\bigpar{\tfrac{1}4<\frax{nW}<\tfrac{3}4}\to\tfrac12\qquad\text{as \ntoo}.  
\end{align}
\end{lemma}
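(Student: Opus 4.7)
My plan is to apply the Weyl equidistribution criterion, which says that a sequence of $[0,1)$-valued random variables $Z_n$ satisfies $Z_n\dto U(0,1)$ if and only if $\E\bigl[e^{2\pi\ii k Z_n}\bigr]\to0$ for every nonzero integer $k$. Taking $Z_n:=\frax{nW}$ and using the identity $e^{2\pi\ii k\frax{nW}}=e^{2\pi\ii k nW}$, I would reduce the first statement to showing that $\phi_W(2\pi k n)\to0$ as $\ntoo$ for every fixed nonzero integer $k$, where $\phi_W$ denotes the characteristic function of $W$.

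Since $W$ is a continuous random variable (in the usual sense of being absolutely continuous with a density $f\in L^1(\bbR)$), the Riemann--Lebesgue lemma immediately yields $\phi_W(t)\to0$ as $|t|\to\infty$. Specializing to $t=2\pi k n$ with fixed $k\neq0$ completes the verification of Weyl's criterion, giving $\frax{nW}\dto U(0,1)$.

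The second statement \eqref{llb} then follows without effort from the portmanteau theorem: the set $(1/4,3/4)$ is open and its boundary has uniform measure zero, so $\P\bigpar{\tfrac14<\frax{nW}<\tfrac34}$ converges to the $U(0,1)$-mass of this interval, which is $1/2$. I anticipate no genuine obstacle; the only nuance is the meaning of \emph{continuous random variable}. If the intended reading were merely that $F_W$ is continuous (allowing singular parts), one would substitute Weyl's classical theorem---that $nw\bmod 1$ equidistributes for Lebesgue-a.e.\ real $w$---together with the non-atomic assumption on $W$. Since the only application of the lemma in the sequel is to $W_1\sim\Beta(\ga,\gb)$, which has a bounded density, the simpler Riemann--Lebesgue route suffices.
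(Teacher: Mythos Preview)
Your proposal is correct and is essentially identical to the paper's own proof: the paper also reduces $\frax{nW}\dto U(0,1)$ to the vanishing of $\E e^{2\pi\ii m\frax{nW}}=\gf(2\pi mn)$ for all nonzero integers $m$ (i.e., the Weyl/Fourier criterion on $\bbT$) and then invokes the Riemann--Lebesgue lemma. Your added remarks on the portmanteau step for \eqref{llb} and on the meaning of ``continuous'' are fine and do not diverge from the paper's argument.
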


\begin{proof}
As is well-known, by considering the Fourier transform on $\bbT=\bbR/\bbZ$, 
$\frax{nW}\dto U(0,1)$ 
is equivalent to 
\begin{align}\label{FT}
\E e^{2\pi\ii m\frax{nW}}\to0
\qquad\text{as \ntoo}
\end{align}
for every $m\in\bbZ\setminus\set0$.
We have
\begin{align}
\E e^{2\pi\ii m\frax{nW}}
=
\E e^{2\pi\ii m{nW}}
=\gf(2\pi mn),
\end{align}
where $\gf$ is the characteristic function of $W$, thus \eqref{FT} follows
by the Riemann-Lebesgue lemma.
\end{proof}

\begin{proof}[Proof of lower bounds in \refT{T1} and \refC{C1}]
(This proof holds for any $q\ge2$.)
Note
that $W_1$ is a continuous random variable while
$\yn$ is integer-valued.
Hence, in any coupling of $\yn$ and $W_1$, 
\begin{equation}\label{pyx}
\P\bigpar{\abs{\yn-nW_1}>\tfrac{1}4}
\ge \P\bigpar{\tfrac14<\frax{nW_1}<\tfrac34}
\ge c,   
\end{equation}
for some $c>0$ and all $n\ge1$,
since the last probability in \eqref{pyx} 
is strictly positive for every $n$ and
converges to $\frac12$ as \ntoo{} by \refL{LLB}.
 It follows that 
$\ellp(\yn,nW_1)\ge\ellx1(\yn,nW_1)\ge c/4$.
The lower bound in \eqref{ayb} follows. The proof of the lower bound in
\eqref{axb} is similar, and the lower bounds in \eqref{ax} and \eqref{ay}
follow trivially from these. 
\end{proof}

\section{Proof of \refT{T1}: $q>2$}\label{Spfq}

We show here how the general case of \refT{T1} follows from the case $q=2$
shown in \refS{Spf2}.
Recall that the lower bounds already are proved for general $q$ in \eqref{pyx}.
Hence, it suffices to show the upper bounds. 

\begin{proof}[Proof of upper bounds in \refT{T1} for $q>2$]
We may still assume $a=1$ for simplicity.
We focus on the version \eqref{a2y}; this implies
\eqref{a2x} and \eqref{ax}--\eqref{ay} as said in the introduction.

We use induction on $q$.
Thus, let $q>2$, and assume that \eqref{a2y} holds when the number of
colours is $q-1$, and when it is 2.

We call the first colour \emph{white}, and all other colours \emph{dark}.
It is then obvious from the definition of the urn, 
in words or by \eqref{polya},
that if we are colour-blind
and only notice whether balls are white or dark, we obtain  another \Polya{}
urn, with two colours. Formally, we define
$\vX_n':=(X_{n,2},\dots,X_{n,q})$ and $X_n':=|\vX_n'|$, and then 
$(X_{n,1},X_n')$ is a \Polya{} urn with initial state $\vx_0'=(x_1,x')$
where $x':=\sum_{j\neq1} x_j$, and the same $a=1$.

Let also $\vY'_n:=(Y_{n,2},\dots,Y_{n,q})=\vX_n'-\vx_0'$ and
$Y_n':=|\vY_n'|$, the number of times in the $n$ first draws that a dark
ball is drawn.
Then, it follows from the case $q=2$ of \eqref{a2y} 
 applied to the urn with
white and dark balls that 
if $Z\sim\Beta(x_1,x')$, then, for each $n\ge1$,
\begin{equation}\label{ox1}
  \elloo\bigpar{Y_{n,1},nZ}\le C.
\end{equation}
In other words, there exists
a  random variable $\Zx\sim\Beta(x_1,x')$ such that \as{}
$\bigabs{Y_{n,1}-n\Zx}\le C$, \ie,
\begin{equation}\label{ox}
Y_{n,1}=n\Zx+\Ooo(1),
\end{equation}
where as in the rest of this proof $\Ooo(1)$ denotes a random variable that is
\as{} bounded by some constant $C$.
(These random variables, and other variables in the proof such as  $\Zx$,
 may depend on $n$, but the bound $C$ does not.)
Note also that $Y_{n,1}+Y_n'=|\vY_n|=n$, and thus
\begin{equation}\label{ox2}
Y_n'=n-Y_{n,1}=n(1-\Zx)+\Ooo(1).
\end{equation}

Moreover, it also follows from the definition that, conditioned on $Y_n'=m$,
the vector $\vX_n'$ is distributed as 
the $m$:th stage of
a \Polya{} urn with $q-1$ colours and
initial state $\vX'_0$.
Consequently, using the induction hypothesis on this urn,
if $\vW'\sim\Dir(x_2,\dots,x_q)$, then
$\elloo\bigpar{(\vY'_n\mid Y_n'=m),m\vW'}\le C$.
This means that for every $m\in\set{0,\dots,n}$ 
there exists a random vector $\vW^*_m$
defined on the event $\set{Y_n'=m}$ such that
\begin{equation}
  \label{cam1}
\bigpar{\vW_m^*\mid Y_n'=m}\eqd \vW'
\end{equation}
and, if $Y_n'=m$, then \as{}
\begin{equation}\label{cam2}
  |\vY_n'-m\vWx_m|\le C.
\end{equation}
 Define the random vector $\vWx$ on our probability space $\gO$ by
$\vWx:=\vWx_{Y_n'}$, \ie,
$\vWx=\vWx_m$ when $Y_n'=m$. Then \eqref{cam1} means that 
$\vWx\eqd \vW'\sim\Dir(x_2,\dots,x_q)$,
and that $\vWx$ is independent of $Y_n'$, while \eqref{cam2} means that
$|\vY_n'-Y_n'\vWx|\le C$, or equivalently
\begin{equation}\label{isaac}
\vY_n'=Y_n'\vWx+\Ooo(1).
\end{equation}
Note also that since $\vWx$ is independent of $Y_n'$, and thus of $Y_{n,1}$,
we may assume that $\Zx$ in \eqref{ox}--\eqref{ox2} is chosen to be
independent of $\vWx$.

Combining \eqref{isaac} and \eqref{ox}--\eqref{ox2}, we obtain,
recalling that the Dirichlet distribution is supported on a bounded set,
\begin{equation}\label{puh}
\begin{split}
  \vY_n&=\bigpar{Y_{1,n},\vY'_n}
=\bigpar{Y_{1,n},Y_n'\vWx}+\Ooo(1)
\\&
=\bigpar{n\Zx,n(1-\Zx)\vWx}+\Ooo(1)
=n\bigpar{\Zx,(1-\Zx)\vWx}+\Ooo(1).
\end{split}
\end{equation}
Since $\Zx\sim\Beta(x_1,x')$ and $\vWx\sim\Dir(x_2,\dots,x_q)$, with $\Zx$
and $\vWx$ independent,
$\vW:=\bigpar{\Zx,(1-\Zx)\vWx}\sim\Dir(x_1,\dots,x_q)$ by \eqref{dirr}.
Consequently, \eqref{puh} shows that
$\elloo\bigpar{\vY_n,n\vW}\le\normoo{\vY_n-n\vW}=O(1)$
with $\vW\sim\Dir(x_1,\dots,x_q)$,
which completes the induction.

This completes the proof of \refT{T1}.
\end{proof}

\begin{proof}
  [Proof of \refC{C1}]
The upper bounds follow by \refT{T1}, and the lower bounds are already
proved above
by \eqref{pyx}
and the corresponding argument for $X_{n,i}$.
\end{proof}

\section{Proofs of \refTs{TKS} and \ref{TL}}\label{SKS}

\begin{proof}[Proof of \refT{TKS}]
We may assume $a=1$, and 
by symmetry, it suffices to consider $i=1$.
Furthermore, by regarding the colours as
white or dark as in \refS{Spfq}, it suffices to consider $q=2$.
Let, as in \refS{Spf2},
$\ga:=x_1$ and $\gb:=|\vx_0|-x_1$, so \eqref{rho} becomes
\begin{align}
  \label{rhoab}
\rho=\min\set{\ga,\gb,1}.
\end{align}

For simplicity, we consider only \eqref{tksy}. 
Similar arguments yield \eqref{tksx}; note that by \eqref{xy}, 
\begin{align}
  \dks\biggpar{\frac{X_{n,1}}{|\vX_n|},W_1}
&=
  \dks\biggpar{\frac{Y_{n,1}+x_1}{n+|\vx_0|},W_1}
=
  \dks\biggpar{\frac{Y_{n,1}+x_1}n,\frac{n+|\vx_0|}n W_1}
\notag\\&
=
  \dks\biggpar{\frac{Y_{n,1}}n,\Bigpar{1+\frac{|\vx_0|}n} W_1-\frac{x_1}{n}}
.\end{align}

Since $Y_{n,1}$ is integer-valued,
we have, for $x\in[k/n,(k+1)/n)$ with integer $k$,
recalling the definitions \eqref{Pnk}, \eqref{Qnk}, \eqref{Rnk},
\begin{align}\label{kula}
  \P(Y_{n,1}/n \le x)-\P(W_1\le x)
&=\P(Y_{n,1}\le k)-\P(W_1\le x)  
\notag\\&
=R_{n,k} -\int_{k/n}^x f_{\ga,\gb}(x)\dd x . 
\end{align}
The integral in \eqref{kula} is less than $\gD Q_{n,k+1}$, and thus,
recalling also \eqref{dks},
\begin{align}\label{kulb}
  \dks\bigpar{\xfrac{Y_{n,1}}n,W_1}
\le\sup_k |R_{n,k}| + \sup_k|\gD Q_{n,k}|.
\end{align}
Furthermore, by taking $x=k/n$ in \eqref{kula},
\begin{align}\label{kulc}
\sup_k |R_{n,k}| \le  \dks\bigpar{\xfrac{Y_{n,1}}n,W_1}
\end{align}
and by instead letting $x\upto (k+1)/n$, 
\begin{align}\label{kuld}
\sup_k |R_{n,k}-\gD Q_{n,k+1}| \le  \dks\bigpar{\xfrac{Y_{n,1}}n,W_1}.
\end{align}
Hence, combining \eqref{kulc} and \eqref{kuld},
\begin{align}\label{kule}
\sup_k |\gD Q_{n,k+1}| \le  2\dks\bigpar{\xfrac{Y_{n,1}}n,W_1}.
\end{align}

It follows from \eqref{Qnk} and \eqref{fab} that
\begin{align}
  \gD Q_{n,1}& \ge c n^{-\ga},
\\
  \gD Q_{n,n}& \ge c n^{-\gb},
\\
  \gD Q_{n,\floor{(n+1)/2}}& \ge c/n.
\end{align}
Consequently, \eqref{kule} yields the lower bound, using \eqref{rhoab},
\begin{align}\label{kulex}
\dks\bigpar{\xfrac{Y_{n,1}}n,W_1} \ge 
c \max\bigpar{ n^{-\ga},n^{-\gb},n\qw}
=c n^{- \rho}.
\end{align}

For an upper bound, we similarly have, see \eqref{gDQ}--\eqref{gDQ1},
\begin{align}
  \max_k \gD Q_{n,k}
\le C \max_{1\le k\le n-1} n\qw f_{\ga,\gb}\Bigparfrac{k}{n}
\le C n^{- \rho}.
\end{align}
Furthermore, \refL{L2} yields
\begin{align}
  \max_k| R_{n,k}|
\le C n^{- \min\set{\ga,\gb,1}}
= C n^{- \rho}.
\end{align}
Consequently, the upper bound in \eqref{tksy} follows from \eqref{kulb}.
\end{proof}

\begin{proof}[Proof of \refT{TL}]
We use the same simplifying assumptions and notations as in the proof of
\refT{TKS}. 
Again, we consider only $Y_{n,1}/n$; the proof for $X_{n,1}/|\vX_n|$ is similar.
\refL{L3} implies that
\begin{align}\label{bja}
\dl\biggpar{\frac{Y_{n,1}}{n},W_1} \le \frac{K}{n}.  
\end{align}

Conversely, suppose that 
$\dl\bigpar{\xfrac{Y_{n,1}}{n},W_1} < \frac{1}{4}n$.
Take any $\gd$ such that
$\dl\bigpar{\xfrac{Y_{n,1}}{n},W_1} < \gd<\frac{1}{4}n$, and 
let $w_0:=\floor{n/2}/n$.
Then, by \eqref{dl} and the fact that $Y_{n,1}$ takes only integer values,
\begin{align}\label{kruka}
  F_{W_1}\Bigpar{w_0+\frac{3}{4n}}&
\le F_{Y_{n,1}/n}\Bigpar{w_0+\frac{3}{4n}+\gd}+\gd
= F_{Y_{n,1}/n}\Bigpar{w_0+\frac{1}{4n}-\gd}+\gd
\notag\\&
\le   F_{W_1}\Bigpar{w_0+\frac{1}{4n}} +2\gd.
\end{align}
We have $W_1\sim \Beta\bigpar{\ga,\gb}$ for some $\ga,\gb>0$,
and thus
\eqref{fab} yields
\begin{align}\label{krubb}
  F_{W_1}\Bigpar{w_0+\frac{3}{4n}} -F_{W_1}\Bigpar{w_0+\frac{1}{4n}} 
=\int_{w_0+\frac{1}{4n}} ^{w_0+\frac{3}{4n}} f_{\ga,\gb}(x)\dd x
\ge \frac{c}n.
\end{align}
Together, \eqref{kruka} and \eqref{krubb} yield $2\gd\ge c/n$.
Hence, if $\dl\bigpar{\xfrac{Y_{n,1}}{n},W_1} < \frac{1}{4}n$, then
$\dl\bigpar{\xfrac{Y_{n,1}}{n},W_1} \ge \frac{c}{2}n$, and thus,
in any case,
\begin{align}\label{bu}
  \dl\biggpar{\frac{Y_{n,1}}{n},W_1} \ge \frac{\min\set{1/4,c/2}}{n}.  
\end{align}
This together with \eqref{bja} completes the proof.
\end{proof}

\section*{Acknowledgement}
This work was partly carried out during a visit to the 
Isaac Newton Institute for Mathematical Sciences
during the programme 
Theoretical Foundations for Statistical Network Analysis in 2016
(EPSCR Grant Number EP/K032208/1)
and was partially supported by a grant from the Simons foundation, and
a grant from
the Knut and Alice Wallenberg Foundation.
I thank Ralph Neininger for asking me the problem, and Gesine Reinert for
helpful comments.

\newcommand\AAP{\emph{Adv. Appl. Probab.} }
\newcommand\JAP{\emph{J. Appl. Probab.} }
\newcommand\JAMS{\emph{J. \AMS} }
\newcommand\MAMS{\emph{Memoirs \AMS} }
\newcommand\PAMS{\emph{Proc. \AMS} }
\newcommand\TAMS{\emph{Trans. \AMS} }
\newcommand\AnnMS{\emph{Ann. Math. Statist.} }
\newcommand\AnnPr{\emph{Ann. Probab.} }
\newcommand\CPC{\emph{Combin. Probab. Comput.} }
\newcommand\JMAA{\emph{J. Math. Anal. Appl.} }
\newcommand\RSA{\emph{Random Struct. Alg.} }
\newcommand\ZW{\emph{Z. Wahrsch. Verw. Gebiete} }
\newcommand\DMTCS{\jour{Discr. Math. Theor. Comput. Sci.} }

\newcommand\AMS{Amer. Math. Soc.}
\newcommand\Springer{Springer-Verlag}
\newcommand\Wiley{Wiley}

\newcommand\vol{\textbf}
\newcommand\jour{\emph}
\newcommand\book{\emph}
\newcommand\inbook{\emph}
\def\no#1#2,{\unskip#2, no. #1,} 
\newcommand\toappear{\unskip, to appear}

\newcommand\arxiv[1]{\texttt{arXiv:#1}}
\newcommand\arXiv{\arxiv}

\def\nobibitem#1\par{}


\begin{thebibliography}{99}

\bibitem{BlackwellKendall1964}
David Blackwell and David Kendall:
The Martin boundary of P\'olya's urn scheme, and an application to stochastic
population growth. 
\emph{J. Appl. Probability} \vol1 (1964), 284--296. 

\bibitem[Eggenberger and P\'olya(1923)]{EggPol1923}
F. Eggenberger and G. P\'olya:
\"Uber die Statistik verketteter Vorg\"ange.
\emph{Zeitschrift Angew. Math. Mech.} \textbf3 (1923), 279--289.

\bibitem{Freedman1965}
David A. Freedman:
Bernard Friedman's urn. 
\emph{Ann. Math. Statist} \textbf{36} (1965), 956--970. 

\bibitem[Goldstein and Reinert(2013)]{GoldsteinReinert}
 Larry Goldstein and Gesine Reinert:
Stein's method for the beta distribution and the P\'olya--Eggenberger urn. 
\emph{J. Appl. Probab.} \vol{50} (2013), no. 4, 1187--1205.

\bibitem[Janson(2004)]{SJ154}  
Svante Janson:
Functional limit theorems for multitype branching processes and generalized
\Polya{} urns.  
\emph{Stoch. Process. Appl.} \textbf{110} (2004),  177--245.

\bibitem{JohnsonKotz}
Norman L. Johnson and Samuel Kotz:
\emph{Urn Models and Their Application}.
John Wiley \& Sons, New York, 
1977. 

\bibitem{Kallenberg}
Olav Kallenberg,
\book{Foundations of Modern Probability.}
2nd ed., Springer, New York, 2002. 

\bibitem[Kuntschik and Neininger(2017)]{KNeininger}
Andrea Kuntschik \& Ralph Neininger:
Rates of convergence for balanced irreducible two-color P\'olya urns. 
\emph{2017 Proceedings of the Fourteenth Workshop on Analytic Algorithmics
  and Combinatorics (ANALCO)}, 94-–99, SIAM, Philadelphia, PA, 2017.  

\bibitem{Mahmoud}
Hosam M Mahmoud:
\emph{P\'olya Urn Models}.
CRC Press, Boca Raton, FL, 2009. 

\bibitem[Markov(1917)]{Markov1917} 
A. A. Markov:
Sur quelques formules limites du calcul des probabilit\'es
(Russian).
\emph{Bulletin de l'Acad\'emie Imp\'eriale des Sciences} 
\textbf{11} (1917), no. 3, 177--186.

\bibitem{NIST}
\emph{NIST Handbook of Mathematical Functions}. 
Edited by Frank W. J. Olver, Daniel W. Lozier, Ronald F. Boisvert and
Charles W. Clark. 
Cambridge Univ. Press, 2010. \\
Also available as 
\emph{NIST Digital Library of Mathematical Functions},
\url{http://dlmf.nist.gov/}

\bibitem[P\'olya(1930)]{Polya1930}
G. P\'olya:
Sur quelques points de la th\'eorie des probabilit\'es.
\emph{Ann. Inst. H. Poincar\'e} \textbf1 (1930), no. 2,
117--161.


\bibitem{Rachev}
Svetlozar T. Rachev,  Lev B. Klebanov, Stoyan V.  Stoyanov \& Frank J.  Fabozzi:
\emph{The Methods of Distances in the Theory of Probability and Statistics}. 
Springer, New York, 2013. 

\bibitem{Wasserstein-metric}
Ludger R\"uschendorf: 
Wasserstein metric. 
\emph{Encyclopedia of Mathematics}.
Available at
\url{https://www.encyclopediaofmath.org/index.php?title=Wasserstein_metric}

\bibitem{Sawyer}
 Stanley A. Sawyer:
Martin boundaries and random walks. 
\emph{Harmonic Functions on Trees and Buildings (New York, 1995)}, 17--44, 
Contemp. Math., 206, Amer. Math. Soc., Providence, RI, 1997. 


\end{thebibliography}
\end{document}